\documentclass[12pt, reqno,twoside]{article}
\usepackage{amssymb}
\usepackage{amsmath}
\numberwithin{equation}{section}
\usepackage{amsthm}
\usepackage{graphicx}
\usepackage{ifthen}
\usepackage{cite}
\usepackage{tikz}
\usetikzlibrary{calc, positioning}
\usepackage{todonotes}
\usepackage{amsmath}
\usepackage{amsfonts}
\usepackage{amssymb}
\usepackage{amsmath,bbm,amssymb,amsxtra}
\usepackage{mathrsfs}
\usepackage{enumerate}
\usepackage{caption}
\allowdisplaybreaks
\usepackage{epsfig}
\usepackage{ae}
\usepackage{epstopdf}
\usepackage[left=2.5cm, right=2.5cm, top=2.5cm, bottom=2.5cm]{geometry}

\def\vint{\mathop{\mathchoice%
         {\setbox0\hbox{$\displaystyle\intop$}\kern 0.22\wd0%
          \vcenter{\hrule width 0.6\wd0}\kern -0.82\wd0}%
         {\setbox0\hbox{$\textstyle\intop$}\kern 0.2\wd0%
          \vcenter{\hrule width 0.6\wd0}\kern -0.8\wd0}%
         {\setbox0\hbox{$\scriptstyle\intop$}\kern 0.2\wd0%
          \vcenter{\hrule width 0.6\wd0}\kern -0.8\wd0}%
         {\setbox0\hbox{$\scriptscriptstyle\intop$}\kern 0.2\wd0%
          \vcenter{\hrule width 0.6\wd0}\kern -0.8\wd0}}%
         \mathopen{}\int}

\newcommand{\diam}{\text{\rm diam\,}}

\newtheorem{thm}{Theorem}[section]

\newtheorem{claim}{Claim}[section]
\newtheorem{rem}[thm]{Remark}
\newtheorem{rems}{Remarks}[section]
\newtheorem{cor}[thm]{Corollary}
\newtheorem{lem}[thm]{Lemma}
\newtheorem{prop}[thm]{Proposition}
\newtheorem{ques}[thm]{Quention}

\newtheorem{subclaim}{Subclaim}
\newtheorem{conj}[equation]{Conjecture}
\newtheorem{case}{Case}[section]
\newtheorem*{mysolution}{Solution}
\newtheorem{step}{Step}[section]
\theoremstyle{definition}
\newtheorem{defn}[thm]{Definition}

\newcounter {own}
\def\theown {\thesection       .\arabic{own}}

\newenvironment{pf}[1][]{%
 \vskip 3mm
 \noindent
 \ifthenelse{\equal{#1}{}}%
  {{\slshape Proof. }}%
  {{\slshape #1.} }%
 }%
{\qed\bigskip}

\newcounter{alphabet}

\makeatletter

\newcommand{\RNum}[1]{\uppercase\expandafter{\romannumeral #1\relax}}

\def\be{\begin{equation}}
\def\ee{\end{equation}}

\newcommand{\ben}{\begin{enumerate}}
\newcommand{\een}{\end{enumerate}}

\newcommand{\blem}{\begin{lem}}
\newcommand{\elem}{\end{lem}}
\newcommand{\bthm}{\begin{thm}}
\newcommand{\ethm}{\end{thm}}
\newcommand{\bcor}{\begin{cor}}
\newcommand{\ecor}{\end{cor}}
\newcommand{\beg}{\begin{examp}}
\newcommand{\eeg}{\end{examp}}
\newcommand{\begs}{\begin{examples}}
\newcommand{\eegs}{\end{examples}}
\newcommand{\bdefe}{\begin{defn}}
\newcommand{\edefe}{\end{defn}}
\newcommand{\bprob}{\begin{prob}}
\newcommand{\eprob}{\end{prob}}
\newcommand{\bques}{\begin{ques}}
\newcommand{\eques}{\end{ques}}
\newcommand{\bei}{\begin{itemize}}
\newcommand{\eei}{\end{itemize}}
\newcommand{\bcl}{\begin{claim}}
\newcommand{\ecl}{\end{claim}}
\newcommand{\bscl}{\begin{subclaim}}
\newcommand{\escl}{\end{subclaim}}
\newcommand{\bca}{\begin{case}}
\newcommand{\eca}{\end{case}}
\newcommand{\bstep}{\begin{step}}
\newcommand{\estep}{\end{step}}
\newcommand{\bsol}{\begin{mysolution}}
\newcommand{\esol}{\end{mysolution}}
\newcommand{\bcon}{\begin{conj}}
\newcommand{\econ}{\end{conj}}
\newcommand{\bcons}{\begin{conjs}}
\newcommand{\econs}{\end{conjs}}
\newcommand{\bprop}{\begin{prop}}
\newcommand{\eprop}{\end{prop}}
\newcommand{\br}{\begin{rem}}
\newcommand{\er}{\end{rem}}
\newcommand{\brs}{\begin{rems}}
\newcommand{\ers}{\end{rems}}
\newcommand{\bo}{\begin{obser}}
\newcommand{\eo}{\end{obser}}
\newcommand{\bos}{\begin{obsers}}
\newcommand{\eos}{\end{obsers}}

\newcommand{\bpf}{\begin{pf}}
\newcommand{\epf}{\end{pf}}

\newcommand{\ba}{\begin{array}}
\newcommand{\ea}{\end{array}}
\newcommand{\beq}{\begin{eqnarray}}
\newcommand{\beqq}{\begin{eqnarray*}}
\newcommand{\eeq}{\end{eqnarray}}
\newcommand{\eeqq}{\end{eqnarray*}}

\newcounter{minutes}
\divide\time by 60
\newcounter{hours}
\multiply\time by 60 \addtocounter{minutes}{-\time}

\begin{document}
	
\bibliographystyle{amsplain}


	
	\title{\Large\bf  Equivalent characterizations of John and uniform domains in doubling metric spaces
		
		\footnotetext{\hspace{-0.35cm}
			$2020$ {\it Mathematics Subject Classification}:  30C45; 30C20.  
			\endgraf{{\it Key words and phrases}: uniform domain, John domain, doubling metric space, quasihyperbolic metric, locally quasiconvex domain. }
			
		}
	}
	\author{Yaxiang Li, Yahui Sheng, Zhuang Wang}
	\date{ }
	\maketitle
\begin{abstract}
	 In this paper, we characterize John and uniform domains in doubling metric spaces. Specifically, we show that a locally quasiconvex domain in a doubling metric space is length John if and only if it is diameter John. For uniform domains, we  prove that a domain in a doubling metric space is length uniform if and only if it is diameter uniform (or distance uniform) and locally quasiconvex. Moreover, in a doubling length metric space, we refine this result by showing that a domain is length uniform (resp. John) if and only if it is diameter uniform (resp. John).
\end{abstract}


\section{Introduction}

John domains in $\mathbb{R}^n$ were originally introduced in 1961 by John~\cite{Jo61} in connection with his work on elasticity, and the term is due to Martio and Sarvas~\cite{MS78}, who at the same time introduced the closely related class of uniform domains. Today, John and uniform domains are fundamental objects in geometric function theory and analysis on metric spaces. They are defined by purely metric conditions that capture the borderline regularity at which classical analytic tools become fully available in non‑smooth settings. In particular, they underpin Sobolev embeddings and Poincaré inequalities~\cite{BK95,HK98}, Jones's extension theorem for $\mathrm{BMO}$ functions and Sobolev functions~\cite{Jo80,Jo81}, the boundary behavior of quasiconformal and quasihyperbolic mappings~\cite{GH87,GO79}, and the uniformization of Gromov hyperbolic spaces into uniform domains in $\mathbb{R}^n$, due to Bonk–Heinonen–Koskela \cite{BHK01}. Over the last three decades, the ideas surrounding John and uniform domains have been extended well beyond the Euclidean setting. Two major developments mark this extension. The first decisive step was V\"ais\"al\"a's systematic study of Banach space domains~\cite{V91,V99}. The second, and more recent, development is the program of analysis on metric spaces developed by Heinonen, Koskela, Shanmugalingam, Tyson and others~\cite{H01,HKST15}, whose goal is to identify the weakest geometric framework that still supports a meaningful first‑order calculus.

John and uniform domains can be defined in three natural ways: by controlling the length, the diameter, or the chord‑distance of curves joining pairs of points. We refer to these as the length, diameter and distance versions, respectively. The precise definitions are collected in Section~\ref{pre}. 
The relations among these formulations are well understood in $\mathbb{R}^n$. For uniform domains all three are equivalent (see~\cite{Ma80,MS78}). For John domains, length and diameter versions are quantitatively equivalent, but a distance John domain need not be diameter John (see \cite[Theorem~2.14 and Example~2.15]{NV91}). When one passes to infinite-dimensional Banach spaces, further differences emerge: V\"ais\"al\"a constructed a diameter uniform domain that is neither length John nor quasiconvex \cite[Properties~3.13, 3.14 and 3.18]{Va04}. That means, in general Banach spaces, a diameter John domain need not be length John, and a diameter uniform domain need not be length uniform. Subsequent work by Zhou and Rasila \cite[Theorem 1.2]{ZR20} established that a domain in a Banach space is length uniform if and only if it is both diameter uniform and natural (see \cite[p.~396]{V92} for the definition of natural domain). 

The aforementioned counterexamples in Banach spaces reveal a fundamental issue: without sufficient geometric control, the diameter or distance conditions are too weak to constrain the length of curves. Metric spaces are essential in modern geometric analysis because they allow us to work with spaces that may lack a linear structure. Therefore, when we move from Banach spaces to general metric spaces, the focus shifts: we must identify which geometric “thickness” conditions can restore the equivalence between the different formulations of John and uniform domains. In recent years, significant progress has been made in understanding the geometric behavior of metric spaces purely in terms of their intrinsic geometric properties. For instance,  Gao–Guo–Huang–Wang \cite{GGHW26} and Guo–Huang–Wang \cite{GH25III} have deeply studied quasihyperbolic geometry and Gromov hyperbolicity in metric spaces, and in particular obtained characterizations of quasihyperbolic John and uniform domains. For more related studies on John spaces, quasihyperbolic geodesics and Gromov hyperbolicity,  see also \cite{ZP24,ZPL25,ZLR22}. These works show that,  under suitable metric constraints (such as the doubling property), the quasihyperbolic metric can effectively control the geometric behavior of domains. These studies suggest that imposing local connectivity and volume control may compensate for the lack of linear structure. Consequently, the different versions of John and uniform domains may become equivalent in such settings. This naturally raises the following questions:

\begin{quote}
	\itshape
	On a metric space, which condition ensures that the length and diameter formulations of John domains are equivalent? Furthermore, for uniform domains, which condition guarantees the equivalence of all three formulations?
\end{quote} 

We consider these problems in the setting of doubling metric spaces.  Our first main result concerns John domains. For the definitions of doubling metric spaces and locally quasiconvex domains, we refer to Definition \ref{def:doubling} and Definition \ref{def:loc-qc}, respectively.

\begin{thm}\label{diam John is John}
	Let $Q>1$ and let $X$ be a $Q$-doubling metric space. If $G\subsetneq X$ is a locally $(q,\lambda)$-quasiconvex domain, then $G$ is a length John domain if and only if it is a diameter John domain.
\end{thm}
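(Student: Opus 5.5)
Length John implies diameter John directly, since $\diam \gamma[x,z]\le \ell(\gamma[x,z])$ for every subcurve. So the content is the converse. Assume $G$ is $c$-diameter John with center $x_0$: every $x\in G$ is joined to $x_0$ by a curve $\gamma$ with $\diam\gamma[x,z]\le c\, d_G(z)$ for $z\in\gamma$, where $d_G(z)=\operatorname{dist}(z,X\setminus G)$. I would not try to rectify $\gamma$ directly. Instead I would use a Whitney-type chain construction that exploits both the doubling property and local quasiconvexity.

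\emph{Step 1 (discretize).} Set $z_0=x$ and move along $\gamma$ toward $x_0$, choosing successive points $z_{k+1}$ as the first point after $z_k$ with $d(z_k,z_{k+1})=\epsilon\, d_G(z_k)$, where $\epsilon$ is small relative to $c$, $q$ and $\lambda$. Stop once $x_0$ lies within this step. Each pair $z_k,z_{k+1}$ lies in a ball $B(z_k,\epsilon d_G(z_k))$ on which local quasiconvexity applies, provided $\epsilon<1/\lambda$ (or whatever radius the definition of local $(q,\lambda)$-quasiconvexity requires). Hence $z_k$ and $z_{k+1}$ are joined by an arc $\alpha_k$ with $\ell(\alpha_k)\le q\, d(z_k,z_{k+1})\le q\epsilon\, d_G(z_k)$. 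This arc stays in a ball comparable to $B(z_k,\epsilon d_G(z_k))$, so $d_G\simeq d_G(z_k)$ along $\alpha_k$. The concatenation $\beta=\alpha_0*\alpha_1*\cdots$ is the candidate length John curve.

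\emph{Step 2 (the length bound).} For $w\in\alpha_m$ we need $\ell(\beta[x,w])\lesssim d_G(w)\simeq d_G(z_m)$, that is,
\[
\sum_{k\le m} d_G(z_k)\lesssim d_G(z_m).
\]
Every $z_k$ with $k\le m$ lies on $\gamma[x,z_m]$, a set of diameter at most $c\,d_G(z_m)$. Hence $d_G(z_k)\le d_G(z_m)+c\,d_G(z_m)$, and all these points lie in the ball $B(z_m,c\,d_G(z_m))$. Group the indices dyadically by $d_G(z_k)\in(2^{-j-1},2^{-j}]\,d_G(z_m)$ for $j\ge -C$. Within the group $j$, the diameter John condition localizes the points: if $z_k$ with $k\le m$ has $d_G(z_k)\le 2^{-j}d_G(z_m)$, then every later point $z_i$, $i\le k$... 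I would instead argue by separation. Two points $z_k,z_l$ in the same group with $k<l$ satisfy $d(z_k,z_l)\ge \epsilon' 2^{-j}d_G(z_m)$. Otherwise the curve would revisit a small ball, and one can prune: choose the discretization greedily so that $z_{k+1}$ is the \emph{last} point of $\gamma$ in $\overline B(z_k,\epsilon d_G(z_k))$, which gives separation automatically. The doubling property then bounds the number of points of group $j$ inside the relevant ball. This is where care is needed. The group-$j$ points all lie in $\gamma[x,z_{k^*}]$, where $z_{k^*}$ is the last point of group $j$, so they sit in a ball of radius $c\,2^{-j}d_G(z_m)$. Doubling gives at most $N(Q,c,\epsilon)$ such points, so group $j$ contributes at most $N 2^{-j}d_G(z_m)$. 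Summing the geometric series yields $\ell(\beta[x,w])\le C\,d_G(w)$. The main obstacle is making this separation-plus-localization step rigorous, which requires the "last exit" choice and checking that the points of a group are confined to a ball of radius comparable to $2^{-j}d_G(z_m)$.

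\emph{Step 3 (endgame).} Handle the final segment near $x_0$ by one more quasiconvex arc. If the curves should be arcs, extract an arc from $\beta$; this only shortens subcurves. The final constant depends on $Q$, $q$, $\lambda$ and $c$, which gives the length John property.
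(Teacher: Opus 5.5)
Your argument is correct in substance but takes a different route from the paper. The paper splits the diameter John curve $\zeta$ at a point $w_0$ with $\diam(\zeta[x,w_0])=\diam(\zeta[w_0,y])$. It then cuts $\zeta[x,w_0]$ at the first points $x_j$ with $\delta_G(x_j)=2^j\delta_G(x)$. Inside each piece it builds a pruned chain of balls in which non-adjacent balls are disjoint (Lemma~\ref{ball sequence}), and doubling bounds the number of balls per piece uniformly (Lemma~\ref{pre-thm}). Consecutive centres are joined by curves with $\ell_k\le 2k_G$, controlled through the quasihyperbolic estimates of Lemma~\ref{two-ball-int}, and the geometric growth of $\delta_G(x_j)$ sums the pieces.

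You instead keep all last-exit points $z_k$ and group them by the dyadic size of $\delta_G(z_k)/\delta_G(z_m)$. Last exit gives $d(z_k,z_l)\ge\epsilon\,\delta_G(z_k)$ for $k<l$. If $k^*$ is the largest index $\le m$ in group $j$, the one-sided diameter condition places the whole group inside $\gamma[x,z_{k^*}]$, which has diameter at most $c\,2^{-j}\delta_G(z_m)$. Doubling therefore caps each group at $N(Q,c,\epsilon)$ points. Finally, $\delta_G(z_k)\le(1+c)\,\delta_G(z_m)$ bounds $j$ from below, so the geometric sum is $\lesssim\delta_G(z_m)$.

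This is sound, and it is leaner. Counting separated points replaces the pruning to disjoint non-adjacent balls. Since $\epsilon<q$ puts $z_{k+1}$ in $B(z_k,q\,\delta_G(z_k))$, local quasiconvexity supplies the connecting arcs directly. So you need neither the quasihyperbolic machinery (Proposition~\ref{t1}, Lemmas~\ref{property of k} and \ref{two-ball-int}) nor the case distinctions in the proof of Lemma~\ref{diameter implies length}. The paper's route, in return, isolates a reusable chain lemma, which the authors present as a tool of independent interest.

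Three adjustments are needed to match the statement.
\begin{enumerate}
\item The paper's John conditions are two-point, and $G$ may be unbounded. No unbounded domain has a John centre: take $z=x_0$ and let $d(x,x_0)\to\infty$. Use the paper's first step instead. With $w_0$ as above, $\diam(\zeta[x,w])\le c\,\delta_G(w)$ for every $w\in\zeta[x,w_0]$, which is exactly the one-sided hypothesis your Steps 1--2 use, with $w_0$ in place of $x_0$. Run your construction on $\zeta[x,w_0]$ and on $\zeta[y,w_0]$ and concatenate. This gives $\ell(\beta[x,w])\le C\,\delta_G(w)$ on the first half and symmetrically from $y$ on the second.
\item In Definition~\ref{def:loc-qc} the radius parameter is $q$ and the quasiconvexity constant is $\lambda$; you have them swapped. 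You need $\epsilon<q$ and, say, $\epsilon\le 1/(2\lambda)$. Then $\ell(\alpha_k)\le\lambda\epsilon\,\delta_G(z_k)$ and $\delta_G\ge\delta_G(z_k)/2$ on $\alpha_k$.
\item Use the last-exit choice from the outset. First hitting, as in your Step 1, gives no separation. With last exit, the $z_k$ are separated by at least $\epsilon\min_{\gamma}\delta_G$ on the compact curve, so the construction terminates.
\end{enumerate}
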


Under the assumptions of Theorem~\ref{diam John is John}, the distance John condition is not equivalent to the diameter John condition, since even in $\mathbb{R}^n$ a distance John domain need not be a diameter John domain; see \cite[Example~2.15]{NV91}.

The second main result is an analogue for uniform domains, in which all three definitions coincide.

\begin{thm}\label{distance uniform is uniform}
	Let $Q>1$, let $X$ be a $Q$-doubling metric space, and let
	$G\subsetneq X$ be a domain. Then the following statements are
	equivalent.
	\begin{enumerate}
		\item[\textup{(i)}]   $G$ is length uniform;
		\item[\textup{(ii)}]  $G$ is diameter uniform and locally
		quasiconvex;
		\item[\textup{(iii)}] $G$ is distance uniform and locally
		quasiconvex.
	\end{enumerate}
\end{thm}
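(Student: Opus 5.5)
The plan is to prove the cycle (i)$\Rightarrow$(ii)$\Rightarrow$(iii)$\Rightarrow$(i). The first two implications are essentially formal. If $G$ is $c$-length uniform, any two points $x,y$ are joined by a curve $\gamma$ with $\ell(\gamma)\le c\,d(x,y)$ and $\min\{\ell(\gamma[x,z]),\ell(\gamma[z,y])\}\le c\,d_G(z)$. Since $\diam\le \ell$ for curves, $G$ is diameter uniform. Local quasiconvexity also follows. For $x\in G$ and $y,z\in B(x,r)$ with $r$ a small fraction of $d_G(x)$, the length uniform curve joining $y,z$ has length at most $2cr$. It therefore stays inside $B(x,(1+2c)r)$, which lies in $G$ once $r\le d_G(x)/(2(1+2c))$; this gives $(c,\lambda)$-local quasiconvexity with $\lambda$ depending on $c$. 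For (ii)$\Rightarrow$(iii), the diameter condition $\diam\gamma\le c\,d(x,y)$ together with $\min\{\diam\gamma[x,z],\diam\gamma[z,y]\}\le c\,d_G(z)$ immediately gives the chord conditions, because $d(x,z)\le\diam\gamma[x,z]$. So all the work lies in (iii)$\Rightarrow$(i).

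For (iii)$\Rightarrow$(i) I would route through the quasihyperbolic metric $k_G$, using the standard Gehring--Osgood type characterization. In a doubling space, a locally quasiconvex domain is length uniform if and only if
\[
k_G(x,y)\le C\log\Bigl(1+\frac{d(x,y)}{\min\{d_G(x),d_G(y)\}}\Bigr)+C,
\]
and I would invoke whichever version of this the paper establishes in Section~\ref{pre} or in the cited works \cite{GGHW26,GH25III}. The local quasiconvexity matters in two places. It makes $k_G$ comparable to the ``chain'' metric built from Whitney-type balls. It also guarantees that short $k_G$-chains can be realized by rectifiable curves of controlled length.

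The core step is to prove the logarithmic bound from the distance uniform condition. Take the distance uniform arc $\gamma$ from $x$ to $y$ and put $r=d(x,y)$. Then $\gamma\subset B(x,cr)$, and every $z\in\gamma$ satisfies $\min\{d(x,z),d(z,y)\}\le c\,d_G(z)$. Parametrize $\gamma$ by a continuity argument and pick points $x=z_0,z_1,\dots$ with $d(z_{i},z_{i+1})\le \epsilon\, d_G(z_i)$. On the half of the curve near $x$, split into dyadic layers $A_j=\{z\in\gamma: 2^{j}d_G(x)\le d(x,z)<2^{j+1}d_G(x)\}$. On $A_j$ we have $d_G(z)\gtrsim 2^j d_G(x)/c$. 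By doubling, each layer can be covered by a number $N(Q,c)$ of balls of radius $\epsilon\,2^j d_G(x)/c$, each of which is a Whitney-type ball. Local quasiconvexity then lets consecutive chain points be joined by curves of length $\le q\,d(z_i,z_{i+1})$ inside $G$ at distance comparable to $d_G(z_i)$ from the boundary, so each layer contributes at most a bounded amount to $k_G$. The number of layers is about $\log(1+r/d_G(x))$. This layer count is the main obstacle. The distance condition only controls where points of $\gamma$ lie, not the length of $\gamma$, so the curve may wander; doubling (the finite covering count per layer) is exactly what replaces the missing length bound. The chain must be taken from the ball cover rather than from $\gamma$ itself, using $\gamma$ only to show that consecutive balls are connected through nearby points.

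Finally, I would convert the quasihyperbolic estimate back to length uniformity. Take a quasihyperbolic near-geodesic, or the concatenated chain curve just built, which exists by local quasiconvexity. Standard arguments in doubling spaces then give both the length bound $\ell\lesssim d(x,y)$ and the cigar condition. Alternatively, one can verify the cigar and length conditions directly on the concatenated chain curve. Each layer has length $\lesssim 2^j d_G(x)$, and these sum geometrically to $\lesssim r$, taking $d_G(x)\le r$; the case $d(x,y)\lesssim d_G(x)$ is handled by local quasiconvexity alone. Points in layer $j$ have $d_G\gtrsim 2^jd_G(x)$, which is comparable to the length already travelled. I would likely present this direct version to avoid depending on a Gehring--Osgood theorem not yet proved in the paper.
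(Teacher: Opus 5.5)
Your implications (i)$\Rightarrow$(ii)$\Rightarrow$(iii) are fine; the paper regards them as immediate. The implication (iii)$\Rightarrow$(i), however, has a genuine gap.

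\emph{The Gehring--Osgood route.} This is not available. The paper proves no such characterization. Moreover, a bound $k_G(x,y)\le C\log(1+d(x,y)/m)+C$, with $m=\min\{\delta_G(x),\delta_G(y)\}$, does not formally give a uniform curve. For a near-geodesic $\beta$, \eqref{QH prop 1} only yields $\ell(\beta)\le m\,e^{C'}(1+d(x,y)/m)^{C'}$, which is superlinear in $d(x,y)$ when $C'>1$.

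\emph{The direct version.} The doubling count does bound the number of Whitney balls, hence $k_G$ and the total length. But the cigar estimate (``points in layer $j$ have $\delta_G\gtrsim 2^j\delta_G(x)$, comparable to the length already travelled'') assumes the concatenated chain climbs the layers in order, and nothing forces this. The chain's connectivity comes only from $\gamma$, and a distance uniform curve can wander. Take the initial arc up to the \emph{first} point $z_0$ with $d(x,z_0)=d(z_0,y)$; you need this choice anyway, so that $d(x,z)\le c\,\delta_G(z)$ holds on the whole arc. Even on this arc, $\gamma$ may run out to distance comparable to $d(x,y)$ and then return to within a few $\delta_G(x)$ of $x$ along a second branch before heading to $z_0$. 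This is allowed because points near $x$ satisfy $d(x,z)\le c\,\delta_G(z)$ automatically. Suppose that in layer $i$ the two branches are at mutual distance comparable to $2^i\delta_G(x)$, while $\delta_G$ there is only of order $2^i\delta_G(x)/c$. Then their Whitney balls never meet, and local quasiconvexity cannot join them. Every chain in the nerve of your cover then goes up to the top layer and back down. At a low-layer point $w$ visited afterwards, both $\ell(\beta[x,w])$ and $\ell(\beta[w,y])$ are $\gtrsim d(x,y)$, while $\delta_G(w)\approx\delta_G(x)$. Doubling controls how many balls there are, not the order in which a chain must visit them.

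\emph{The missing idea.} You need to apply the hypothesis again, to new pairs of points that are both far from $X\setminus G$ at the current scale.
\begin{enumerate}
\item Let $p_j$ be the first point of $\gamma[x,z_0]$ with $d(x,p_j)=2^j\delta_G(x)$. Then $\delta_G(p_j)\ge 2^j\delta_G(x)/c$.
\item A fresh distance $c$-uniform curve from $p_j$ to $p_{j+1}$ lies in $\overline{B}(p_j,3c\,2^j\delta_G(x))$.
\item Along it, $\delta_G\ge 2^j\delta_G(x)/(2c^2)$: near its endpoints by the triangle inequality, elsewhere by the distance condition.
\item Your covering argument plus local quasiconvexity turns it into a piece of length $\lesssim 2^j\delta_G(x)$ on which $\delta_G\gtrsim 2^j\delta_G(x)$.
\item Concatenating these pieces in $j$, and treating the $y$-side symmetrically, makes the ordering automatic.
\end{enumerate}

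This is in substance the paper's route, split into two steps. First, Lemma~\ref{distance uniform is diameter uniform} (V\"ais\"al\"a's argument) upgrades distance uniform to diameter uniform by repeatedly choosing new distance uniform curves from points $x_j\in S(x,2^{j-1}r)$ to $z_0$. Second, in Lemma~\ref{diameter implies length}, the diameter John inequality $\diam(\zeta[x,w])\le c\,\delta_G(w)$ confines the arc of $\zeta$ between consecutive first points with $\delta_G=2^j\delta_G(x)$ to a ball of radius comparable to $2^j\delta_G(x)$. Inside that ball, the disjoint-ball chain of Lemma~\ref{ball sequence}, the count of Lemma~\ref{doubling property} and the curves of Lemma~\ref{two-ball-int}(i) produce pieces ordered by scale.
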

The above two theorems yield a clean corollary for the important subclass of doubling length spaces, in which the local quasiconvexity condition is automatically satisfied (see Lemma~\ref{qc is locally qc}). For the definition of length spaces, see Section~\ref{pre}.

\begin{cor}\label{John-cor}
	Let $Q>1$ and let $X$ be a $Q$-doubling  length metric space. Suppose that $G\subsetneq X$ is a domain. Then the following hold.
	\begin{itemize}
		\item [\textup{(1)}] $G$ is a length John domain if and only if it is a diameter
		John domain.
		\item[\textup{(2)}]  $G \text{ is  length uniform }
		\iff
		G \text{ is  diameter uniform }
		\iff
		G \text{ is  distance uniform}.$
	\end{itemize}
\end{cor}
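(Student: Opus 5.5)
The plan is to obtain the corollary directly from Theorems~\ref{diam John is John} and~\ref{distance uniform is uniform}. The only extra input is that, in a doubling length space, every domain $G\subsetneq X$ is locally quasiconvex with uniform constants. This is Lemma~\ref{qc is locally qc}, which I will take as given. The heuristic reason is as follows. Fix $x\in G$ and two points $y,z$ in a small ball $B(x,c\,d(x,\partial G))$. In a length space, for every $\epsilon>0$ they are joined by a curve of length at most $(1+\epsilon)d(y,z)$. Such a curve stays within distance about $(1+\epsilon)d(y,z)/2$ of $y$ or $z$, hence inside $B(x,d(x,\partial G))\subset G$ once $c$ is small, say $c=1/4$. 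So $G$ is locally $(q,\lambda)$-quasiconvex with, for example, $q=1/4$ and $\lambda=2$.

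For part (1), I apply Theorem~\ref{diam John is John} to $G$, which is now a locally quasiconvex domain in a $Q$-doubling space. This gives length John $\iff$ diameter John immediately.

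For part (2), I use Theorem~\ref{distance uniform is uniform}. Since the locally quasiconvex hypothesis is automatic, (ii) reduces to "$G$ is diameter uniform" and (iii) reduces to "$G$ is distance uniform". The three equivalences then follow.

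The only real point to check is that the constants in Lemma~\ref{qc is locally qc} depend only on the space (here, absolute constants). This ensures that "locally quasiconvex" in the theorems is satisfied in the quantitative sense they require. I also need the containment of near-geodesics in $G$ when the points are close relative to the distance to the boundary. This is handled by the choice of $c$ above, so I expect no real obstacle.
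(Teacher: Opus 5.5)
Your proposal is correct and matches the paper's proof: the paper also notes that a length space is $c$-quasiconvex for every $c>1$, applies Lemma~\ref{qc is locally qc} to conclude that $G$ is locally quasiconvex, and then reads off both parts directly from Theorems~\ref{diam John is John} and~\ref{distance uniform is uniform}. Your explicit choice $q=1/4$, $\lambda=2$ is consistent with the lemma's requirement $q<1/(1+c)=1/3$ for $c=2$.
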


\begin{rem}\rm
\begin{enumerate}
\item[\textup{(a)}]  In Theorem \ref{diam John is John} and Theorem \ref{distance uniform is uniform}, the $Q$-doubling condition  cannot be removed, as illustrated by an example of V\"ais\"al\"a in \cite[Properties~3.2, 3.13, 3.14 and 3.18]{Va04}, in which a locally quasiconvex, diameter uniform domain is constructed that is neither length John nor length uniform. 
Moreover, the equivalence between diameter uniform and distance uniform domains, which underlies the implication (ii) $\iff$ (iii) in Theorem~\ref{distance uniform is uniform}, in fact holds in any metric space without the doubling hypothesis or local quasiconvexity (see Lemma \ref{distance uniform is diameter uniform}).
	
  \item[\textup{(b)}]The proofs of  Theorems \ref{diam John is John} and  \ref{distance uniform is uniform} rely on a chain argument (Lemma \ref{ball sequence}) and a covering argument (Lemma \ref{doubling property}) to replace diameter John (or uniform) curves by length John (or uniform) curves. Lemma \ref{ball sequence} is of independent interest:  it provides a decomposition of a curve into a chain of overlapping balls such that non‑adjacent balls are pairwise disjoint. Such a decomposition provides a useful tool for converting diameter bounds into length bounds in doubling metric spaces.
\end{enumerate}
\end{rem}

This paper is organized as follows. Section 2 introduces the basic concepts, terminologies, and preliminary results that will be used later. Section 3 is devoted to the proofs of Theorems \ref{diam John is John} and \ref{distance uniform is uniform}, as well as the proof of Corollary \ref{John-cor}.

\section{Preliminaries}\label{pre}

Throughout this paper, $(X,d)$ denotes a metric space;
 when no
confusion arises, we  simply write $X$ for $(X,d)$. The distance between $x,y\in X$ is denoted by $d(x,y)$. For
$x\in X$ and $r>0$, we denote by $B(x,r)$, $\overline{B}(x,r)$ and $S(x,r)$ the sets $\{y\in X:  d(x, y)<r\}$,  $\{y\in X:  d(x, y)\leq r\}$ and $\{y\in X:  d(x, y)=r\}$, respectively. 

By a \emph{curve} in $X$ we always
mean a continuous map $\gamma:I=[a,b]\to X$; we identify $\gamma$
with its image $\gamma(I)$ when no ambiguity arises,  and the length is
\[
    \ell(\gamma)=\sup\sum_{i=1}^{n} d\left( \gamma(t_{i}),\gamma(t_{i-1})\right) ,
\]
where the supremum is taken over all partitions
$a=t_{0}<t_{1}<\dots<t_{n}=b$. The curve is called
\emph{rectifiable} if $\ell(\gamma)<\infty$, and a metric space $X$
is said to be \emph{rectifiably connected} if every two points of
$X$ can be joined by a rectifiable curve.  A domain $G\subset X$ is called rectifiably connected if every two points of $G$ can be joined by a rectifiable curve in $G$.

Let $\gamma$ be a rectifiable curve. The length function
$s_{\gamma}:[a,b]\to[0,\ell(\gamma)]$ is defined by
$s_{\gamma}(t)=\ell(\gamma[a,t])$, and there is a unique curve
$\gamma_{s}:[0,\ell(\gamma)]\to X$ such that
$\gamma=\gamma_{s}\circ s_{\gamma}$. The curve $\gamma_{s}$ is
called the \emph{arc-length parametrization} of $\gamma$; by
construction $\ell\bigl(\gamma_{s}[0,t]\bigr)=t$ for every
$t\in[0,\ell(\gamma)]$. The line integral over $\gamma$ of a Borel
function $\varrho:X\to[0,\infty)$ is then
\[
    \int_{\gamma}\varrho ds=\int_{0}^{\ell(\gamma)}\varrho\circ\gamma_{s}(t)dt.
\]

The quasihyperbolic metric is a fundamental tool in geometric
function theory and in analysis on metric spaces. It was introduced
by Gehring and Palka~\cite{GP76} for domains in
$\mathbb{R}^{n}$ and has since been extensively studied in Banach
spaces (see~\cite{V90,V91,V92,V98,V99}) and in metric spaces
(see~\cite{BH07,HL15,HL16,HRWZ18} and the references therein).

For a rectifiably connected domain $G\subsetneq X$, the
\emph{quasihyperbolic length} of a rectifiable curve $\gamma$ in $G$
is defined by
\[
    \ell_{\mathrm{qh}}(\gamma)=\int_{\gamma}\frac{ds}{\delta_{G}(z)},
\]
where $\delta_G(z)=\inf\{d(z, y): y\in X\setminus G\}$ denotes the distance from $z$ to $X\setminus G$. Observe that $\delta_{G}(x)>0$ and $B( x,\delta_{G}(x)) \subset G$ for
every $x\in G$.  The \emph{quasihyperbolic metric} between $x,y\in G$ is
\begin{equation}\label{eq:2.1}
    k_{G}(x,y)=\inf_{\gamma}\ell_{\mathrm{qh}}(\gamma),
\end{equation}
where the infimum is taken over all rectifiable curves $\gamma$ in
$G$ joining $x$ and $y$.

Next we collect the definitions of John domains and uniform domains. Throughout, $G\subsetneq X$
is a domain and $c\geq 1$ is a constant.

\begin{defn}\label{def:length-John}
A domain $G\subsetneq X$ is called a \emph{length $c$-John domain}
if every pair of points $x,y\in G$ can be joined by a rectifiable
curve $\gamma\subset G$ satisfying
\begin{equation*}
    \min\left\lbrace \ell(\gamma[x,z]),\ell(\gamma[z,y])\right\rbrace 
  \leq c\,\delta_{G}(z)
    \tag{$\mathrm{J}_{\ell}$}, \quad \mbox{for all}\,\, z\in\gamma,
\end{equation*}
where $\gamma[u,v]$ denotes the subcurve of $\gamma$ with endpoints $u$ and $v$. Such a curve $\gamma$ is called a \emph{length $c$-John curve}.
\end{defn}

\noindent
Replacing the length $\ell$ by $\diam$ on the left-hand side of
$(\mathrm{J}_{\ell})$ gives the notion of a \emph{diameter
$c$-John domain} (and \emph{diameter $c$-John curve}):
\begin{equation*}
    \min\left\lbrace \diam(\gamma[x,z]),\diam(\gamma[z,y])\right\rbrace 
    \leq
    c\,\delta_{G}(z)
    \tag{$\mathrm{J}_{\diam}$} \quad \mbox{for all}\,\, z\in\gamma,
\end{equation*}
while replacing the left-hand side by $\min\{d(x,z),d(z,y)\}$ gives
the notion of a \emph{distance $c$-John domain} (and \emph{distance
$c$-John curve}):
\begin{equation*}
    \min\left\lbrace d(x,z),d(z,y)\right\rbrace
    \leq
    c\,\delta_{G}(z)
    \tag{$\mathrm{J}_{d}$}\quad \mbox{for all}\,\, z\in\gamma.
\end{equation*}

\begin{defn}\label{def:length-uniform}
A domain $G\subsetneq X$ is called a \emph{length $c$-uniform
domain} if every pair of points $x,y\in G$ can be joined by a
rectifiable curve $\gamma\subset G$ that simultaneously satisfies
\begin{equation*}
    \ell(\gamma)\leq c\,d(x,y) \tag{$\mathrm{U}_{\ell}^{\mathrm{g}}$}
\end{equation*}
and
\begin{equation*}
    \min\left\lbrace \ell(\gamma[x,z]),\ell(\gamma[z,y])\right\rbrace 
    \leq
    c\,\delta_{G}(z)
    \tag{$\mathrm{U}_{\ell}^{\mathrm{loc}}$}\quad \mbox{for all}\,\, z\in\gamma.
\end{equation*}
Note that the condition $(\mathrm{U}_{\ell}^{\mathrm{loc}})$ is exactly the same as the condition $(\mathrm{J}_{\ell})$, the length John condition. A curve  satisfying $(\mathrm{U}_{\ell}^{\mathrm{g}})$ is called \emph{$c$-quasiconvex}.
A curve satisfying both $(\mathrm{U}_{\ell}^{\mathrm{g}})$ and $(\mathrm{U}_{\ell}^{\mathrm{loc}})$ is called a
\emph{length $c$-uniform curve}.
\end{defn}

\noindent
The notions of \emph{diameter $c$-uniform domain} and
\emph{distance $c$-uniform domain} are obtained from
Definition~\ref{def:length-uniform} by the following standard
substitutions.

\begin{itemize}\itemsep4pt
  \item \emph{Diameter uniform}: replace $\ell(\gamma)$ by
        $\diam(\gamma)$ in
        $(\mathrm{U}_{\ell}^{\mathrm{g}})$ and
        $\min\left\lbrace \ell(\gamma[x,z]),\ell(\gamma[z,y])\right\rbrace $ by
        $\min\left\lbrace \diam(\gamma[x,z]),\diam(\gamma[z,y])\right\rbrace $ in
        $(\mathrm{U}_{\ell}^{\mathrm{loc}})$. The corresponding
        curve is called a \emph{diameter $c$-uniform curve}.
  \item \emph{Distance uniform}: replace $\ell(\gamma)$ by
        $\diam(\gamma)$ in
        $(\mathrm{U}_{\ell}^{\mathrm{g}})$ and
        $\min\left\lbrace \ell(\gamma[x,z]),\ell(\gamma[z,y])\right\rbrace$ by
        $\min\left\lbrace d(x,z),d(z,y)\right\rbrace $ in
        $(\mathrm{U}_{\ell}^{\mathrm{loc}})$. The corresponding
        curve is called a \emph{distance $c$-uniform curve}.
\end{itemize}

\begin{defn}\label{def:doubling}
A metric space $X$ is called \emph{$Q$-doubling} if for each ball
${B}(x,r)$, every $r/2$-separated subset of $ {B}(x,r)$ has cardinality at
most $Q$; here, a set $S\subset X$ is called
\emph{$\sigma$-separated}, $\sigma>0$, if $d(u,v)\geq\sigma$ for any
two distinct $u,v\in S$.
\end{defn}

\noindent
A standard volume comparison shows that $\mathbb{R}^{n}$ equipped
with the Euclidean distance is $2^{n}$-doubling.

Every rectifiably connected metric space $(X,d)$ admits a natural
(or intrinsic) metric, called its \emph{length metric}:
\[
    \rho(x,y):=\inf\bigl\{\ell(\gamma):
        \gamma\text{ a rectifiable curve joining $x$ and $y$ in }X
    \bigr\}.
\]
The space $(X,d)$ is called a \emph{length space} if
$d(x,y)=\rho(x,y)$ for all $x,y\in X$.


\begin{defn}\label{def:qc}
Let $c\geq 1$. A metric space $X$ (resp.\ a domain $G\subset X$) is
called \emph{$c$-quasiconvex} if every two points $x,y\in X$
(resp.\ $x,y\in G$) can be joined by a $c$-quasiconvex curve $\gamma$ in $X$
(resp.\ in $G$). 
\end{defn}

\begin{defn}\label{def:loc-qc}
Let $0<q<1$ and $\lambda\geq 1$. A rectifiably connected domain
$G\subsetneq X$ is called \emph{locally $(q,\lambda)$-quasiconvex}
if for every $x\in G$ and every pair of points
$y,z\in B(x,q\,\delta_{G}(x))$ there exists a
$\lambda$-quasiconvex curve $\gamma$ in $G$ joining $y$ and $z$.
\end{defn}

Clearly, quasiconvexity implies local quasiconvexity; the converse,
however, fails, as the following example shows. The space
$\bigl(\mathbb{R}^{2}\setminus\mathbb{R}_{+},\,|\cdot|\bigr)$ is
locally $(q,1)$-quasiconvex for every $q\in(0,1)$ but is not
quasiconvex, where
$\mathbb{R}_{+}=\{(x_{1},x_{2})\in\mathbb{R}^{2}:x_{1}\geq 0,\,
x_{2}=0\}$ and $|\cdot|$ is the Euclidean metric. On the other
hand, $(\mathbb{R}^{2},|\cdot|)$ itself is $1$-quasiconvex, which shows that quasiconvexity of the ambient space does not, in general, descend to its proper subdomains. However, every proper subdomain of a quasiconvex metric space is locally quasiconvex, as the following lemma shows.

\begin{lem}\rm\label{qc is locally qc}
	Let $X$ be a $c$-quasiconvex metric space and $G\subsetneq X$ be a domain. Then $G$ is locally $(q,c)$-quasiconvex for any $q\in\left( 0,1/(1+c)\right) $. 
\end{lem}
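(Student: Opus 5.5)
The plan is to verify the definition of local $(q,c)$-quasiconvexity directly. Fix $q\in(0,1/(1+c))$, a point $x\in G$, and two points $y,z\in B(x,q\,\delta_G(x))$. We must produce a $c$-quasiconvex curve joining $y$ and $z$ that lies inside $G$. Because $X$ is $c$-quasiconvex, there is a curve $\gamma$ in $X$ joining $y$ and $z$ with $\ell(\gamma)\le c\,d(y,z)$. This $\gamma$ is our candidate, and the only thing left to check is that $\gamma\subset G$. Note also that $G$ is rectifiably connected, as Definition~\ref{def:loc-qc} requires: any two points of $G$ can be chained through finitely many balls of the above type, and $G$ is connected and open. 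Alternatively, one can treat this as implicit, since the same argument produces rectifiable curves locally.

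To show $\gamma\subset G$, we show that $\gamma\subset B(x,\delta_G(x))$, which is contained in $G$ by the observation made after the definition of $\delta_G$. Take any $w\in\gamma$. Then
\[
d(x,w)\le d(x,y)+\ell(\gamma[y,w])\le d(x,y)+\ell(\gamma)\le d(x,y)+c\,d(y,z).
\]
By the triangle inequality, $d(y,z)<2q\,\delta_G(x)$. This gives $d(x,w)<(1+2c)q\,\delta_G(x)$, which is slightly too weak if we only assume $q<1/(1+c)$.

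The one delicate point is therefore getting the constant $1/(1+c)$ rather than $1/(1+2c)$. To fix this, we use the better of the two endpoints:
\[
d(x,w)\le \min\{d(x,y)+\ell(\gamma[y,w]),\ d(x,z)+\ell(\gamma[w,z])\}.
\]
Since $\ell(\gamma[y,w])+\ell(\gamma[w,z])=\ell(\gamma)$, one of these two subarcs has length at most $\ell(\gamma)/2\le c\,d(y,z)/2<cq\,\delta_G(x)$. Hence
\[
d(x,w)<q\,\delta_G(x)+cq\,\delta_G(x)=(1+c)q\,\delta_G(x)<\delta_G(x).
\]
Therefore $w\in B(x,\delta_G(x))\subset G$. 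This shows $\gamma\subset G$ and $\ell(\gamma)\le c\,d(y,z)$, which completes the argument.
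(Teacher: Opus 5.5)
Your proof is correct and is essentially the paper's argument: the paper also takes the $c$-quasiconvex curve $\gamma$ supplied by $X$ and bounds every $w\in\gamma$ by $d(x,w)\le q\,\delta_G(x)+\tfrac12\ell(\gamma)\le(1+c)q\,\delta_G(x)<\delta_G(x)$, which is exactly your ``shorter of the two subarcs'' estimate. Your extra observation that $G$ is rectifiably connected (the open--closed chaining argument over the balls $B(x,q\,\delta_G(x))$) supplies a hypothesis in Definition~\ref{def:loc-qc} that the paper's proof leaves implicit.
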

\begin{proof}
	Let $q\in(0,1/(1+c))$ and $z\in G$. For any $x,y\in B(z,q\,\delta_G(z))$, by the quasiconvexity of $X$, there is a curve $\gamma$ in $X$ joining $x$ and $y$ such that 
	$$\ell(\gamma)\leq c\,d(x,y).$$
	Since $d(x, y)<2q\,\delta_G(z)$, we have 
	$$\ell(\gamma)\leq 2cq\,\delta_G(z).$$
	For any $u\in\gamma$, it is clear that 
	$$d(z,u)\leq q\,\delta_G(z)+\frac{1}{2}\ell(\gamma)\leq(1+c)q\,\delta_G(z)<\delta_G(z).$$
	This means that $\gamma\subset B(z,\delta_G(z))\subset G$, which completes the proof.
\end{proof}
Since every length space is  $c$-quasiconvex for all $c>1$,  it follows from Lemma~\ref{qc is locally qc} that every proper subdomain of a length space is locally quasiconvex.

The following lemma contains comparison results between the metrics $d$ and $k_G$ in  metric spaces.
\begin{lem}\rm\label{property of k}
	Suppose that \( X\) is a metric space and  \( G \subsetneq X \) is a rectifiably connected domain.
	\begin{enumerate}
		\item[$(1)$] For each \( x, y \in G \), if $\gamma$ is a rectifiable curve joining $x$ and $y$, then we have
		\begin{equation}\label{QH prop 1}
			\left| \log\frac{\delta_G(x)}{\delta_G(y)}\right|\leq\log\left(1+\frac{d(x,y)}{\min\{\delta_G(x),\delta_G(y)\}} \right)\leq\log\left(1+\frac{\ell(\gamma)}{\min\{\delta_G(x),\delta_G(y)\}} \right)\leq \ell_{\text{qh}}(\gamma)
		\end{equation}
		and
		\begin{equation}\label{QH prop 2}
			\log\left( 1+\frac{d(x,y)}{\min\{\delta_G(x),\delta_G(y)\}}\right)\leq k_G(x,y).
		\end{equation}
		
		\item[$(2)$]  If \( z \in G \), $0<t<\frac{1}{2}$ and \( x,y\in \overline{B}(z, t\,\delta_G(z))  \), then
		\begin{equation}\label{QH prop 3}
			k_G(x, y)\geq \frac{1}{1+2t}\frac{d(x,y)}{\delta_G(z)}.
		\end{equation}

		\item[$(3)$]  Let $0<t, q<1$ and $\lambda\geq 1$ be constants. If $G$ is a locally $(q,\lambda)$-quasiconvex domain, then for each $z\in G$ and every pair of points $x,y\in \overline{B}( z,\frac{tq}{4\lambda}\,\delta_G(z))$, we have
		\begin{equation}\label{QH prop 4}
			k_G(x, y) \leq \frac{\lambda}{1-t} \frac{d(x,y)}{\delta_G(z)}.
		\end{equation}
	\end{enumerate}
\end{lem}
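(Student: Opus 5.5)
We prove the three parts separately; each is a short estimate along a curve, using only the triangle inequality and the $1$-Lipschitz property of $\delta_G$, namely $|\delta_G(u)-\delta_G(v)|\le d(u,v)$.

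\emph{Part (1).} Assume without loss of generality that $\delta_G(x)\le\delta_G(y)$. Since $\delta_G(y)\le\delta_G(x)+d(x,y)$, we get
\[
\left|\log\frac{\delta_G(x)}{\delta_G(y)}\right|\le\log\left(1+\frac{d(x,y)}{\delta_G(x)}\right).
\]
The middle inequality in \eqref{QH prop 1} follows from $d(x,y)\le\ell(\gamma)$ and monotonicity of $\log$. For the last inequality, parametrize $\gamma$ by arc length from $x$. Then $\delta_G(\gamma_s(t))\le\delta_G(x)+t$, so
\[
\ell_{\mathrm{qh}}(\gamma)\ge\int_0^{\ell(\gamma)}\frac{dt}{\delta_G(x)+t}=\log\left(1+\frac{\ell(\gamma)}{\delta_G(x)}\right).
\]
The minimum in \eqref{QH prop 1} is $\delta_G(x)$ by our normalization, so this is exactly the claimed bound. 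Taking the infimum over all rectifiable $\gamma$ joining $x$ and $y$ gives \eqref{QH prop 2}.

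\emph{Part (2).} Let $\gamma$ be any rectifiable curve in $G$ joining $x$ and $y$. There are two cases.
\begin{itemize}
\item If $\gamma$ stays inside $B(z,2t\,\delta_G(z))$, then on $\gamma$ we have $\delta_G\le(1+2t)\delta_G(z)$. Hence $\ell_{\mathrm{qh}}(\gamma)\ge \ell(\gamma)/((1+2t)\delta_G(z))\ge d(x,y)/((1+2t)\delta_G(z))$.
\item If $\gamma$ leaves $B(z,2t\,\delta_G(z))$, take the initial subarc from $x$ up to its first exit point. Its length is at least $t\,\delta_G(z)$, and along it $\delta_G\le(1+2t)\delta_G(z)$. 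So $\ell_{\mathrm{qh}}(\gamma)\ge t/(1+2t)$. This is at least $d(x,y)/((1+2t)\delta_G(z))$ whenever $d(x,y)\le t\,\delta_G(z)$.
\end{itemize}
If $d(x,y)>t\,\delta_G(z)$, the bound $t/(1+2t)$ is too weak. Here we use the arc from $x$ to the first exit point and the arc from the last entry point to $y$; together they have length at least $2t\,\delta_G(z)\ge d(x,y)$. This is the one delicate bookkeeping step, since the annular region can only be controlled this way. Taking the infimum over $\gamma$ gives \eqref{QH prop 3}.

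\emph{Part (3).} Since $x,y\in \overline{B}(z,\tfrac{tq}{4\lambda}\delta_G(z))\subset B(z,q\,\delta_G(z))$, local quasiconvexity gives a curve $\gamma$ in $G$ joining $x$ and $y$ with $\ell(\gamma)\le\lambda d(x,y)\le \tfrac{tq}{2}\delta_G(z)$. Every point $u\in\gamma$ then satisfies
\[
d(u,z)\le \frac{tq}{4\lambda}\delta_G(z)+\frac{\ell(\gamma)}{2}\le \frac{tq}{4}\delta_G(z)+\frac{tq}{4}\delta_G(z)<t\,\delta_G(z).
\]
It follows that $\delta_G(u)\ge(1-t)\delta_G(z)$, and therefore
\[
k_G(x,y)\le\ell_{\mathrm{qh}}(\gamma)\le\frac{\ell(\gamma)}{(1-t)\delta_G(z)}\le\frac{\lambda\, d(x,y)}{(1-t)\delta_G(z)},
\]
which is \eqref{QH prop 4}.
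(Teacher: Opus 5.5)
Your proof is correct and follows essentially the same route as the paper. For (2) you use the same two-case argument: either the curve stays in the ball of radius $2t\,\delta_G(z)$, or it leaves and two non-overlapping pieces of length at least $t\,\delta_G(z)$ each are forced. Your first-exit/last-entry choice of these pieces is cleaner than the paper's disjoint-subcurve argument, and your extra split on whether $d(x,y)\le t\,\delta_G(z)$ is unnecessary. For (3) you use the same $\lambda$-quasiconvex curve kept inside $B(z,t\,\delta_G(z))$; for (1), where the paper only cites the literature, your integration of $1/(\delta_G(x)+s)$ along the arc-length parametrization is the standard argument behind those citations.
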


\begin{proof}
	$(1)$ The inequalities \eqref{QH prop 1} and \eqref{QH prop 2} follow from \cite[p. 9]{GH25III}; a similar proof is given in \cite[Lemma 2.2]{V90}. \medskip
	
	$(2)$ Let  \( \gamma : [0, \ell(\gamma)] \to G \) be a rectifiable curve  joining \( x, y \in G \) which is parameterized by its arc-length. Such a curve exists because $G$ is rectifiably connected. Then it suffices to show that
	\begin{equation}\label{lem-2}
		\frac{1}{1+2t}\frac{d(x,y)}{\delta_G(z)} \leq \int_0^{\ell(\gamma)} \frac{dr}{\delta_G(\gamma(r))}.
	\end{equation}
	We divide the proof into two cases:
	
	\textbf{Case 1.} \( \gamma \subset \overline{B} (z, 2t\,\delta_G(z))  \). In this case, for all \( u \in \gamma \),
	\[
	\delta_G(u) \leq d(z,u) + \delta_G(z) \leq (1+2t)\,\delta_G(z).
	\]
	Therefore, it follows that
	\[
	\int_0^{\ell(\gamma)} \frac{dr}{\delta_G(\gamma(r))} \geq \frac{\ell(\gamma)}{(1+2t)\,\delta_G(z)} \geq \frac{1}{1+2t}\frac{d(x,y)}{\delta_G(z)}.
	\]
	
	\textbf{Case 2.} \( \gamma \not\subset \overline{B}(z, 2t\,\delta_G(z)) \). From the connectedness of \( \gamma \), there exist  subcurves of $\gamma$ contained in $\overline{B}(z, 2t\,\delta_G(z))$  joining the spheres \( S(z, t\,\delta_G(z)) \) and \( S(z, 2t\,\delta_G(z)) \). We may assume that among these subcurves there exist two disjoint ones, denoted $\gamma_1$ and $\gamma_2$. Otherwise,  if no two such subcurves are disjoint (i.e., every pair intersects),  one can find  a curve $\gamma^\prime\subset \gamma$ joining $x, y$ such that $\gamma^\prime\subset \overline{B}(z, 2t\,\delta_G(z))$. Applying Case $1$ to $\gamma^\prime$, we obtain
	$$\int_0^{\ell(\gamma)} \frac{dr}{\delta_G(\gamma(r))} \geq\int_0^{\ell(\gamma^\prime)} \frac{dr}{\delta_G(\gamma^\prime(r))} \geq \frac{1}{1+2t}\frac{d(x,y)}{\delta_G(z)},$$
	which gives \eqref{lem-2}.
	
	For the two disjoint subcurves $\gamma_1$ and $\gamma_2$, it is clear that
	\[
	\ell(\gamma_1) \geq t\,\delta_G(z)  \quad \text{and}  \quad \ell(\gamma_2) \geq t\,\delta_G(z).
	\]
	Since \( d(x,y)\leq 2t\,\delta_G(z)\), it follows that
	\[
	\ell(\gamma_1)+\ell(\gamma_2) \geq 2t\,\delta_G(z)\geq  d(x,y).
	\]
	Moreover, because $\gamma_1$ and $\gamma_2$ belong to $\overline{B}(z, 2t\,\delta_G(z))$, 	for every \( u \in \gamma_1\cup\gamma_2 \) we have
	\[ \delta_G(u)\leq d(u, z)+\delta_G(z) \leq (1+2t)\,\delta_G(z). \]
	Consequently,
	$$ \int_0^{\ell(\gamma)} \frac{dr}{\delta_G(\gamma(r))}\geq \frac{\ell(\gamma_1)+\ell(\gamma_2)}{(1+2t)\,\delta_G(z)} \geq \frac{1}{1+2t}\frac{d(x,y)}{\delta_G(z)}.$$
	
	Combining these two cases, we obtain that \eqref{lem-2} holds, and hence the proof of $(2)$ is complete.\medskip

	$(3)$ Let $z\in G$  and let $x,y\in \overline{B}( z,\frac{tq}{4\lambda}\,\delta_G(z))$. Since $G$ is locally $(q,\lambda)$-quasiconvex, there exists a rectifiable curve \( \gamma \) in \( G \) joining \( x \) to \( y \) with \( \ell(\gamma) \leq \lambda \,d(x,y). \)
	
	For every \( u \in \gamma \), it is clear that
	\[
	d(u,z)\leq d(x,z)+\ell(\gamma)
	\leq d(x,z)+\lambda \,d(x,y) \leq (2\lambda+1)\frac{tq}{4\lambda}\,\delta_G(z)< t\,\delta_G(z),
	\]
	which implies that
	\[
	\gamma \subset B(z, t\,\delta_G(z)).
	\]
	Furthermore, for every \( u \in \gamma \), we have
	\[
	\delta_G(u) \geq \delta_G(z)-d(z,u) \geq (1-t)\,\delta_G(z).
	\]		
	Let \( \gamma_s : [0, \ell(\gamma)] \to G \) be the arc-length parametrization of \( \gamma \). Then
	\[
	k_G(x, y) \leq \int_0^{\ell(\gamma)} \frac{dr}{\delta_G(\gamma_s(r))} \leq \frac{\ell(\gamma)}{(1-t)\,\delta_G(z)} \leq  \frac{\lambda}{1-t}\frac{d(x,y)}{\delta_G(z)}.
	\]
	This completes the proof of $(3)$.
\end{proof}

Next, we establish that $\ell_k(\gamma)=\ell_{\text{qh}}(\gamma)$ for every rectifiable curve $\gamma$ in $G$, where $\ell_k(\gamma)$ denotes the length of $\gamma$ in the metric space $(G, k_G)$. This equality then  implies that $(G, k_G)$ is a length space.

\begin{prop}\rm\label{t1}
Let \( X \) be a metric space, and \( G \subsetneq X \) be a rectifiably connected domain. If $\gamma$ is a rectifiable curve in \( G \),
then $\ell_{k}(\gamma)=\ell_{\text{qh}}(\gamma)$; in particular,  \( (G, k_G) \) is a length space.
\end{prop}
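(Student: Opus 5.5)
We prove the two inequalities $\ell_k(\gamma)\le \ell_{\mathrm{qh}}(\gamma)$ and $\ell_k(\gamma)\ge \ell_{\mathrm{qh}}(\gamma)$ separately, working with the arc-length parametrization $\gamma_s:[0,L]\to G$, $L=\ell(\gamma)$. Since $\gamma$ is compact in $G$, the function $\delta_G$ is bounded between two positive constants $m\le\delta_G\le M$ on $\gamma$. In particular $\ell_{\mathrm{qh}}(\gamma)<\infty$.

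For the upper bound, take any partition $0=t_0<\dots<t_n=L$. By definition of $k_G$ as an infimum over curves, $k_G(\gamma_s(t_{i-1}),\gamma_s(t_i))\le \ell_{\mathrm{qh}}(\gamma_s|_{[t_{i-1},t_i]})$. Summing and using additivity of the line integral over subintervals gives $\sum_i k_G(\gamma_s(t_{i-1}),\gamma_s(t_i))\le \ell_{\mathrm{qh}}(\gamma)$. Taking the supremum over partitions yields $\ell_k(\gamma)\le\ell_{\mathrm{qh}}(\gamma)$. This also shows $\gamma$ is $k_G$-rectifiable; $k_G$-continuity follows from the same bound applied to short subarcs.

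For the lower bound we use Lemma~\ref{property of k}(2) locally. Fix $\varepsilon\in(0,1/2)$. By uniform continuity of $\gamma_s$ and of $\delta_G$ (which is $1$-Lipschitz), choose a fine partition so that each subarc $\gamma_i=\gamma_s([t_{i-1},t_i])$ has length at most $\varepsilon m$. Then for $z_i=\gamma_s(t_{i-1})$, every point of $\gamma_i$ lies in $\overline B(z_i,\varepsilon\delta_G(z_i))$.

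Refine each subarc further into pieces $[u_{j-1},u_j]$. Lemma~\ref{property of k}(2) with $t=\varepsilon$ gives
\[
k_G(\gamma_s(u_{j-1}),\gamma_s(u_j))\ge \frac{1}{1+2\varepsilon}\frac{d(\gamma_s(u_{j-1}),\gamma_s(u_j))}{\delta_G(z_i)}.
\]
Summing over the refinement and taking the supremum gives $\ell_k(\gamma_i)\ge \frac{1}{1+2\varepsilon}\frac{\ell(\gamma_i)}{\delta_G(z_i)}$, since the metric length is recovered as the supremum of chord sums. On $\gamma_i$ we have $\delta_G\ge (1-\varepsilon)\delta_G(z_i)$, so
\[
\frac{\ell(\gamma_i)}{\delta_G(z_i)}\ge (1-\varepsilon)\int_{\gamma_i}\frac{ds}{\delta_G}.
\]
By additivity of $\ell_k$ over consecutive subarcs (a standard property of length in any metric space), summing over $i$ gives $\ell_k(\gamma)\ge \frac{1-\varepsilon}{1+2\varepsilon}\ell_{\mathrm{qh}}(\gamma)$. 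Letting $\varepsilon\to0$ gives equality.

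The main subtlety is this lower bound: it needs the local comparison of Lemma~\ref{property of k}(2) rather than the global estimate \eqref{QH prop 2}, together with additivity of $\ell_k$. One should also note that $\gamma$ is a continuous curve in $(G,k_G)$, which follows from the upper bound.

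For the length-space claim, let $x,y\in G$ and $\eta>0$. By \eqref{eq:2.1} there is a rectifiable $\gamma$ joining $x$ and $y$ with $\ell_{\mathrm{qh}}(\gamma)<k_G(x,y)+\eta$, and hence $\ell_k(\gamma)<k_G(x,y)+\eta$. Conversely, $\ell_k(\gamma)\ge k_G(x,y)$ for any curve joining $x$ and $y$. One caveat is that the intrinsic metric of $(G,k_G)$ takes the infimum over all $k_G$-rectifiable curves, not only $d$-rectifiable ones. Since that infimum is always at least $k_G(x,y)$, the curves above already show it equals $k_G(x,y)$, so $(G,k_G)$ is a length space.
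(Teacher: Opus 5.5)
Your proof is correct and follows the paper's route. The inequality $\ell_k(\gamma)\le\ell_{\mathrm{qh}}(\gamma)$ comes from the same partition argument. For the reverse inequality you use the same subdivision of $\gamma$ into short arcs inside $\overline{B}(z_i,\varepsilon\,\delta_G(z_i))$, with Lemma~\ref{property of k}(2) applied to chord sums.

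The only differences are minor. You bound $\delta_G\ge(1-\varepsilon)\delta_G(z_i)$ pointwise on each arc and take the supremum over refinements, which gives the multiplicative factor $\frac{1-\varepsilon}{1+2\varepsilon}$ in place of the paper's Riemann-sum approximation with additive errors. You also explicitly check the $k_G$-continuity of $\gamma$ needed for the length-space claim, which the paper leaves implicit.
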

\begin{proof}
	Let \( \gamma_s \) be the arc-length parametrization of \( \gamma \) and \( L=\ell(\gamma) \). Let \( 0 = t_0 < t_1 < \cdots < t_m = L \) be a partition of \([0, L]\). Then
	\[
	\sum_{j=1}^m k_G(\gamma_s(t_{j-1}), \gamma_s(t_j)) \leq \sum_{j=1}^m \ell_{\text{qh}}(\gamma_s[t_{j-1}, t_j]) = \ell_{\text{qh}}(\gamma).
	\]
	Hence \( \ell_{k}(\gamma) \leq \ell_{\text{qh}}(\gamma) \).
	
	Now we  turn to prove \( \ell_{k}(\gamma) \geq \ell_{\text{qh}}(\gamma) \). First, we give the following claim.
\begin{claim}\label{section3-claim}\rm
For any $0<\epsilon<\frac{1}{2}$, there exists a partition \( P: 0 = t_0 < t_1 < \cdots < t_n = L \) of \([0, L]\) with the following property: if we set \( x_i = \gamma_s(t_i), \gamma_i = \gamma_s[t_{i-1}, t_i] \) for \( 1 \leq i \leq n \), then 

\begin{equation}\label{first condition}
\ell_{\text{qh}}(\gamma) \leq \sum_{i=1}^n \frac{t_i-t_{i-1}}{\delta_G(\gamma_s(t_i))} + \epsilon
\end{equation}
and 
\begin{equation}\label{second condition}
\gamma_i \subset B(x_i, \epsilon\, \delta_G(x_i)).
\end{equation}

\end{claim}

For the proof of this claim, since both \(\gamma_s: [0, L]\to G\) and \(\delta_G: G\to (0, +\infty)\) are continuous, the composition \(\delta_G \circ \gamma_s:[0, L]\to (0, +\infty)\) is continuous on the compact interval \([0, L]\).  
		Consequently, this continuous function attains a positive minimum:  
		\[
		M_1 = \min_{t \in [0, L]} \delta_G(\gamma_s(t)).
		\]
Moreover, $1/(\delta_G\circ\gamma_s)$ is integrable on $[0, L]$.
		
		By the definition of the Riemann integral, there exists \(M_2 > 0\) such that for any partition \(P: 0 = t_0 < t_1 < \cdots < t_n = L\) with mesh size \(\|P\| := \max\limits_i (t_i - t_{i-1}) < M_2\), it holds that 
		\[
		\left| \sum_{i=1}^n \frac{t_i-t_{i-1}}{\delta_G(\gamma_s(t_i))} - \int_0^L \frac{1}{\delta_G(\gamma_s(t))} \, dt \right| < \frac{\epsilon}{2}.
		\]
	Therefore, for such partitions,
		\[
		\ell_{\mathrm{qh}}(\gamma) < \sum_{i=1}^n \frac{t_i-t_{i-1}}{\delta_G(\gamma_s(t_i))} + \frac{\epsilon}{2},
		\]
		which establishes the condition \eqref{first condition}.
		
		For the  condition \eqref{second condition},  since \(\gamma_s\) is an arc-length parametrization, we have
		\[
		d(\gamma_s(s), \gamma_s(t)) \leq |s - t| \quad \text{for all } s, t \in [0, L].
		\]
		Therefore, if we require that the subinterval length satisfies
		\(t_i - t_{i-1} < \epsilon M_1\), then for any \(t \in [t_{i-1}, t_i]\),
		\[
		d(\gamma_s(t), \gamma_s(t_i)) \leq t_i - t < \epsilon M_1 \leq \epsilon \,\delta_G(\gamma_s(t_i)).
		\]
    This means \(\gamma_i \subset B(x_i, \epsilon\, \delta_G(\gamma_s(t_i)))\), and the  condition \eqref{second condition} holds.
		
		To simultaneously satisfy both conditions, choose
		\[
		M = \min\left\{ M_2, \epsilon M_1 \right\}.
		\]
		By taking a partition \(P: 0 = t_0 < t_1 < \cdots < t_n = L\) with mesh size \(\|P\| < M\), the conditions \eqref{first condition} and \eqref{second condition} hold,  which completes the proof of this claim.

\medskip
	We now proceed with the proof. Let \(P: 0 = t_0 < t_1 < \cdots < t_n = L\) be the partition in Claim \ref{section3-claim}. 
	For each \( 1 \leq i \leq n \), we choose successive points \( x_{i-1} = x_{i,0}, \, x_{i,1}, \cdots, \, x_{i,n_i} = x_i \) of \( \gamma_i \) such that
\begin{equation}\label{subcurve length bounded}
	\ell(\gamma_i) \leq \sum_{j=1}^{n_i} d(x_{i,j-1}, x_{i,j}) + \epsilon / n.
\end{equation}
The condition \eqref{second condition} and the estimate \eqref{QH prop 3} yield that, for all \( 1 \leq i \leq n \) and \( 1 \leq j \leq n_i \),
\begin{equation}\label{subcurve lk bounded}
	\frac{d(x_{i,j-1}, x_{i,j})}{\delta_G(x_i)} \leq (1+2\epsilon) k_G(x_{i,j-1}, x_{i,j}).
\end{equation}
It follows from \eqref{first condition}, \eqref{subcurve length bounded}, \eqref{subcurve lk bounded} and the fact \( t_i - t_{i-1} = \ell(\gamma_i) \) that
	\[
	\ell_{\text{qh}}(\gamma) \leq \sum_{i=1}^n \frac{t_i-t_{i-1}}{\delta_G(\gamma_s(t_i))} +\epsilon\leq \sum_{i=1}^n\sum_{j=1}^{n_i} \frac{d(x_{i,j-1}, x_{i,j})}{\delta_G(x_i)} + 2\epsilon\leq (1+2\epsilon) \sum_{i=1}^n\sum_{j=1}^{n_i} k_G(x_{i,j-1}, x_{i,j}) + 2\epsilon.
	\]
	Notice that the double sum is at most \( \ell_{k}(\gamma) \). Since \( \epsilon \) is arbitrary, this yields the desired inequality.
	
It remains to show that the metric space $(G, k_G)$ is a length space. Let $x, y \in G$ and $\epsilon>0$. Since $G$ is rectifiably connected, by the definition of $k_G$ there exists a curve $\gamma$ joining $x$ and $y$ such that $\ell_{\text{qh}}(\gamma) < k_G(x,y)+\epsilon$. Using $\ell_k(\gamma)=\ell_{\text{qh}}(\gamma)$, we obtain $\ell_{k}(\gamma) < k_G(x,y)+\epsilon$. The arbitrariness of $\epsilon$ implies that $(G, k_G)$ is a length space.
\end{proof}


\section{Proofs of Theorems~\ref{diam John is John} and \ref{distance uniform is uniform}}
Before proceeding to the proofs of the main theorems, we first state and prove several auxiliary lemmas. We note from Proposition~\ref{t1} that, for every rectifiable curve $\gamma$ in a rectifiably connected domain, we always have $\ell_k(\gamma)=\ell_{\text{qh}}(\gamma)$.  So, for simplicity, throughout this section we use $\ell_k(\gamma)$ to denote the quasihyperbolic length of $\gamma$ in a rectifiably connected domain.

\begin{lem}\label{doubling property}$($\rm \cite[Lemma 4.1.11]{HKST15}$)$
	Let \(X \) be a \( Q \)-doubling metric space and \(G \subsetneq X\) be a domain. Fix \( R > 0 \) and \( a \geq 1 \), and let \( r = \frac{R}{a} \). Then for any \( x \in G\), the ball \( {B}(x, R) \) contains at most \( Q^{\lfloor\log_2 a\rfloor} \) disjoint balls of radius \( r \). Here and hereafter, \( \lfloor \cdot \rfloor \) means the greatest integer part.
\end{lem}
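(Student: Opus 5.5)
The plan is to reduce the packing statement to a count of separated points and then iterate the doubling condition of Definition~\ref{def:doubling}, as in \cite[Lemma 4.1.11]{HKST15}. Let $B(y_1,r),\dots,B(y_N,r)$ be pairwise disjoint balls contained in $B(x,R)$. Their centres lie in $B(x,R)$. They are also $r$-separated: if $d(y_i,y_j)<r$ for some $i\neq j$, then $y_j\in B(y_i,r)\cap B(y_j,r)$, which contradicts disjointness. So it suffices to bound the cardinality of an $r$-separated subset $S$ of $B(x,R)$ by $Q^{k}$ for a suitable $k$ depending only on $a$.

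\textbf{Step 1 (one level of covering).} For any ball $B(w,\rho)$, take a maximal $\rho/2$-separated subset of $B(w,\rho)$. By Definition~\ref{def:doubling} it has at most $Q$ points, and by maximality the balls of radius $\rho/2$ centred at these points cover $B(w,\rho)$. Each of these covering balls is again a ball, so the step can be repeated.

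\textbf{Step 2 (iteration and counting).} Applying Step 1 $m$ times covers $B(x,R)$ by at most $Q^{m}$ balls of radius $R/2^{m}$. Fix one such ball $B(w,R/2^m)$. The set $S\cap B(w,R/2^m)$ is $r$-separated. If $r\ge R/2^{m+1}$, it is therefore $(R/2^m)/2$-separated, and the doubling condition allows at most $Q$ such points in $B(w,R/2^m)$. Summing over the covering balls gives $\# S\le Q^{m+1}$ whenever $2^{m+1}\ge R/r=a$. The smallest admissible exponent is $m+1=\lceil\log_2 a\rceil$.

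\textbf{Main obstacle: the exponent.} This argument gives $\#S\le Q^{\lceil\log_2 a\rceil}$, which equals the stated $Q^{\lfloor\log_2 a\rfloor}$ exactly when $a$ is a power of $2$. When $a$ is not a power of $2$ the exponents differ by one, and I do not see how to close the gap from the doubling hypothesis of Definition~\ref{def:doubling} alone. For instance, if $1<a<2$ the stated bound is $Q^0=1$. But take a three-point space $\{x,y,z\}$ with all mutual distances $0.9$, and let $R=1$, $a=3/2$, so $r=2/3$. Then $B(y,r)=\{y\}$ and $B(z,r)=\{z\}$ are two disjoint balls of radius $r$ inside $B(x,1)$. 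So the bound with $\lfloor\cdot\rfloor$ seems to require either the convention of \cite{HKST15}, with its possibly different normalisation of the doubling constant, or the reading that for $a<2$ one counts balls up to the trivial factor $Q$. I would check the convention in \cite{HKST15} before writing the proof. Otherwise I would prove the version with $\lceil\log_2 a\rceil$ and adjust later uses accordingly. Every later application only needs a bound depending on $Q$ and $a$ alone, and for those purposes the two versions are equivalent.
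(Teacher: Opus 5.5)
Your proposal is correct, and your diagnosis of the statement is right. The paper gives no argument for this lemma beyond the citation to \cite{HKST15}, so your iteration is the proof that actually has to be supplied. It establishes the bound $Q^{\lceil\log_2 a\rceil}$. The stated bound $Q^{\lfloor\log_2 a\rfloor}$ is false under Definition~\ref{def:doubling}, and checking the source cannot rescue it: for $1<a<2$ the claimed bound is $Q^0=1$ however $Q$ is normalised. Your three-point space is $3$-doubling (take $G=\{x\}$ as the domain), and it even contains three disjoint balls $\{x\},\{y\},\{z\}$ of radius $2/3$ inside $B(x,1)$.

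The failure occurs at every $a>1$ that is not a power of $2$, and your exponent is sharp. Fix $k\ge 0$, $n\ge 2$ and $a\in(2^k,2^{k+1})$.
\begin{itemize}
\item Let $X=\{1,\dots,n\}^{k+1}$, with $d(u,v)=2^{k+1-i}$ for $u\neq v$, where $i$ is the first index with $u_i\neq v_i$. This is an ultrametric.
\item Let $S\subset B(p,\rho)$ be $\rho/2$-separated with $\#S\ge 2$, and let $D$ be its largest mutual distance. Then $D<\rho$, so every mutual distance lies in $(D/2,D]$. Being a power of $2$, each mutual distance equals $D=2^{k+1-i}$.
\item Hence the points of $S$ share their first $i-1$ letters and have distinct $i$-th letters, so $\#S\le n$. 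Thus $X$ is $n$-doubling.
\item With $R=a$ and $r=1$ we have $B(x,R)=X$ and $B(y,1)=\{y\}$. This gives $n^{k+1}=Q^{\lceil\log_2 a\rceil}$ disjoint balls of radius $R/a$, against the claimed $Q^{\lfloor\log_2 a\rfloor}=n^k$.
\end{itemize}
The case $k=0$ is your example.

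Two small points on your write-up. First, at $a=1$ your iteration needs $m\ge 0$, so it yields $Q$, not $Q^0$. Handle this case directly: if $y\in B(x,R)$ then $x\in B(y,R)$, so any two balls of radius $R$ inside $B(x,R)$ meet. With that, $Q^{\lceil\log_2 a\rceil}$ holds for all $a\ge 1$.

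Second, as you say, only constants move downstream. In Lemma~\ref{pre-thm} the balls are closed with radii $r_i\ge R/a$. Your reduction to separated centres still applies, because $\overline{B}(z_i,r_i)\cap\overline{B}(z_j,r_j)=\emptyset$ forces $d(z_i,z_j)>r_i\ge R/a$. The bound there becomes $k\le 2Q^{\lceil\log_2(24\lambda c(1+c)/q)\rceil}$, which is at most $Q$ times the stated one. So the constants in Lemma~\ref{diameter implies length} grow by at most a factor $Q$, and Theorems~\ref{diam John is John} and~\ref{distance uniform is uniform} are unaffected.
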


\begin{lem}\rm\label{ball sequence}
	Let $\mu\in (0,1)$ and \( x, y \in G \). If $d(x,y)> \mu \max\{\delta_G(x),\delta_G(y)\}$, and \( \gamma \subset G \) is a rectifiable curve joining $x$ and $y$, then there exists a sequence of points $x=z_1,z_2,\dots,z_{k+1}=y$ on $\gamma$ and a finite sequence of balls \(\{\overline{B}_i\}_{i=1}^k\) in \(G\) such that
	\begin{itemize}
		\item[(1)] For each \( i \in \{1, \ldots, k\} \), \( \overline{B}_i =\overline{B}(z_i, r_i) \) with \( r_i = \mu\,\delta_G(z_i) \), where \( z_i \in \gamma[z_{i-1}, y] \), but \( z_i \notin B_{i-1} \);
		\item[(2)] \( z_{k+1} =y \in \overline{B}_{k} \);
		\item[(3)] For any pair \(\{i, j\} \subset \{1, \ldots, k\}\) with \(|j - i| > 1\), \( \overline{B}_i \cap \overline{B}_j = \emptyset \) provided that \( k \geq 3 \);
		\item[(4)] For each \( i \in \{1, \ldots, k-1\} \),  \( \overline{B}_i \cap \overline{B}_{i+1} \neq \emptyset \).
	\end{itemize}
\end{lem}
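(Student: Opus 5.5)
The plan is to build the balls greedily along $\gamma$ using ``last exit'' times, and then prune the resulting chain so that non-adjacent balls become disjoint. Parametrize $\gamma:[0,1]\to G$ with $\gamma(0)=x$, $\gamma(1)=y$. Set $m=\min_{t}\delta_G(\gamma(t))>0$, which is positive because $\gamma$ is compact. Every ball $\overline{B}(z,\mu\,\delta_G(z))$ with $z\in G$ lies in $B(z,\delta_G(z))\subset G$, since $\mu<1$. The hypothesis $d(x,y)>\mu\max\{\delta_G(x),\delta_G(y)\}$ guarantees that $y\notin\overline{B}(x,\mu\,\delta_G(x))$, so the construction below does not stop trivially at the first step.

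\emph{Step 1 (greedy chain).} Put $w_1=x$, $s_1=0$ and $D_1=\overline{B}(w_1,\mu\,\delta_G(w_1))$. Given $w_i=\gamma(s_i)$ and $D_i$, stop if $y\in D_i$. Otherwise let $s_{i+1}=\sup\{t: \gamma(t)\in D_i\}<1$ and $w_{i+1}=\gamma(s_{i+1})$, and set $D_{i+1}=\overline{B}(w_{i+1},\mu\,\delta_G(w_{i+1}))$.

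By continuity, $w_{i+1}\in D_i$ and $d(w_i,w_{i+1})=\mu\,\delta_G(w_i)$, so $w_{i+1}\notin$ the open ball $B(w_i,\mu\,\delta_G(w_i))$. Also, $\gamma(t)\notin D_i$ for all $t>s_{i+1}$. Consequently, for $j\ge i+2$ the point $w_j$, which lies strictly after $s_{i+1}$, satisfies $d(w_i,w_j)>\mu\,\delta_G(w_i)\ge\mu m$. The same bound holds for $j=i+1$ by the equality above. Hence the $w_i$ form a $\mu m$-separated subset of the compact set $\gamma$, which is finite. Therefore the process terminates at some index $N$ with $y\in D_N$.

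\emph{Step 2 (pruning).} Define indices $i_1=1$ and $i_{p+1}=\max\{j\le N: D_j\cap D_{i_p}\neq\emptyset\}$, stopping when $i_p=N$. Since $D_{i_p}\cap D_{i_p+1}\ne\emptyset$, we get $i_{p+1}\ge i_p+1$, so the indices strictly increase and reach $N$ after finitely many steps. Call the final index $k$, and set $z_p=w_{i_p}$, $\overline{B}_p=D_{i_p}$, and $z_{k+1}=y$. We then check the four properties.
\begin{itemize}
\item[(2)] This holds because the last index is $N$ and $y\in D_N$.
\item[(4)] This holds by the choice of $i_{p+1}$.
\item[(1)] Since $i_{p}>i_{p-1}$, the point $z_p$ lies on $\gamma$ at or after the last exit time from $D_{i_{p-1}}$. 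Hence it lies on $\gamma[z_{p-1},y]$ and outside the open ball $B_{p-1}$. (When $i_p=i_{p-1}+1$, it lies on the sphere.)
\item[(3)] Suppose $b\ge a+2$. Then $i_b>i_{a+1}=\max\{j: D_j\cap D_{i_a}\ne\emptyset\}$, so $D_{i_b}\cap D_{i_a}=\emptyset$.
\end{itemize}

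The main delicate point is exactly this disjointness of non-adjacent balls. The greedy exit construction alone only gives that centers avoid earlier balls, not that the balls themselves are disjoint, and this is why the ``maximal intersecting index'' pruning is needed. One must also make sure the pruning preserves (1), which works precisely because centers are chosen at last-exit times. A secondary point is finiteness of the greedy chain, which is handled by the uniform separation $\mu m$ on the compact curve; no doubling hypothesis is needed for this lemma.
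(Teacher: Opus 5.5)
Your proposal is correct and follows essentially the same route as the paper: a greedy chain of closed balls whose centers are chosen at last-exit points of $\gamma$ from the previous ball, followed by pruning to the maximal index whose ball still meets the current one. You also verify explicitly what the paper leaves to ``by construction'': the greedy procedure terminates, via the $\mu\min_{\gamma}\delta_G$-separation of the centers on the compact curve, and properties (1) and (3) survive the pruning.
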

\begin{proof}
The argument proceeds in two steps: 
	First, we shall prove that there exists a finite sequence $x=x_1,x_2,\ldots,x_{k_0},x_{k_0+1}=y$ on $\gamma$ such that
	\begin{equation}\label{eq-x-i-1}
	d(x_i,x_{i+1})=\mu\,\delta_G(x_i),\,\, 1\leq i\leq k_0-1
	\end{equation}
	and 
	 $$d(x_{k_0},y)\leq\mu\,\delta_G(x_{k_0}).
	$$
	To this end, set \( x_1 = x \). Since $d(x,y)> \mu \max\{\delta_G(x),\delta_G(y)\}$,  choose $x_2$ to be the last point on $\gamma$ along the direction from $x$ to $y$ such that \( x_2\in\gamma \cap S(x, \mu\,\delta_G(x))\).
	
	If \( y \in \overline{B}(x_2, \mu\,\delta_G(x_2)) \), then we take \( k_0=2 \). If \( y \notin \overline{B}(x_2, \mu\,\delta_G(x_2)) \), choose $x_3$ to be the last point on $\gamma[x_2,y]$ along the direction from $x_2$ to $y$ such that \( x_3\in\gamma \cap S(x_2, \mu\,\delta_G(x_2))\).

	If \( y \in \overline{B}(x_3, \mu\,\delta_G(x_3)) \), then we take \( k_0=3 \). If \( y \notin \overline{B}(x_3, \mu\,\delta_G(x_3)) \),
repeat this procedure until there exist $\nu\geq 3$ and a point \( x_\nu \in \gamma[x_{\nu-1}, y] \) such that \( y \in \overline{B}(x_\nu, \mu\,\delta_G(x_\nu)) \). We take \( k_0=\nu \).
	
Second,	select a subsequence \(\{z_j\}_{j=1}^{k}\) of \(\{x_i\}_{i=1}^{k_0}\) and get the ball sequence \(\{\overline{B}_j\}_{j=1}^{k}\), where \( \overline{B}_j=\overline{B}(z_j, \mu\,\delta_G(z_j)) \).
	
	Let \(\{x_i\}_{i=1}^{k_0+1}\) be the sequence constructed as above. For each \(i \in \{1, \ldots, k_0\}\), set
	\[
	\overline{B}_{1,i} = \overline{B}(x_i, \mu\,\delta_G(x_i)).
	\]
Then \eqref{eq-x-i-1} implies that $\overline{B}_{1,i}\cap \overline{B}_{1,i+1}\not=\emptyset$ for all $1\leq i<k_0-1$. We are going to select the desired sequence of balls from \(\{\overline{B}_{1,i}\}_{i=1}^{k_0}\).
	Set \( \overline{B}_1 = \overline{B}_{1,1} \) and \( z_1 = x_1 \). We define
		\[
		r_1 = \max\bigl\{r: r \in \{2, \ldots, k_0\} \text{ and } \overline{B}_1 \cap \overline{B}_{1,r} \neq \emptyset\bigr\}.
		\]
		
	Set \( \overline{B}_2 = \overline{B}_{1,r_1} \) and \( z_2 = x_{r_1} \).
	If \( r_1 = k_0 \), then we find the needed sequence of balls \(\{\overline{B}_i\}_{i=1}^{k}\) with \( k=2 \).
	If \( r_1 < k_0 \), then define
			\[
			r_2 = \max\bigl\{r: r \in \{r_1 + 1, \ldots, k_0\} \text{ and } \overline{B}_2 \cap \overline{B}_{1,r} \neq \emptyset\bigr\}.
			\]
	
Set \( \overline{B}_3 =\overline{B}_{1,r_2} \) and \( z_3 = x_{r_2} \).
If \( r_2 = k_0 \), then we find the needed sequence of balls \(\{\overline{B}_i\}_{i=1}^{k}\) with \( k=3 \).
		 If \( r_2 < k_0 \), then define
			\[
			r_3 = \max\bigl\{r: r \in \{r_2 + 1, \ldots, k_0\} \text{ and } \overline{B}_3 \cap \overline{B}_{1,r} \neq \emptyset\bigr\}.
			\]
	
By repeating this procedure, we find an integer \( k \leq k_0 \) such that
		\[
		\max\bigl\{r : r \in \{r_{k-1} + 1, \ldots, k_0\} \text{ and } \overline{B}_{k-1} \cap \overline{B}_{1,r} \neq \emptyset\bigr\} = k_0.
		\]
		Then set \(\overline{B}_{k} = \overline{B}_{1,k_0} \), \( z_{k} = x_{k_0} \) and \( z_{k+1} = y \). 

	In this way, we find the desired subsequence \(\{z_j\}_{j=1}^{k}\) of \(\{x_i\}_{i=1}^{k_0}\) and the corresponding ball sequence \(\{\overline{B}_j\}_{j=1}^{k}\). By construction, $\{z_j\}^{k}_{j=1}$ and $\{\overline{B}_j\}^{k}_{j=1}$ satisfy properties (1)–(4) in the lemma.
\end{proof}

\begin{lem}\rm\label{two-ball-int}
	Suppose that $X$ is a metric space and $G\subsetneq X$ is a locally $(q,\lambda)$-quasiconvex domain. Let $x, y\in G$. By Proposition~\ref{t1}, there exists a rectifiable curve $\gamma$ in $G$ joining $x$ and $y$ such that 	$\ell_k(\gamma) \leq  2 k_G(x,y)$. Then the following statements hold.
	\begin{itemize}
		\item[{\bf (i)}] If $\overline{B}(x, \frac{q}{8\lambda}\,\delta_G(x)) \cap \overline{B}(y, \frac{q}{8\lambda}\,\delta_G(y)) \neq \emptyset$, then the curve $\gamma$ satisfies
		\begin{equation*}
			\ell(\gamma) \le e^{q}\min\{\delta_G(x),\delta_G(y)\}
		\end{equation*}
		and
		\begin{equation*}
		e^{-q}\max\{\delta_G(x),\delta_G(y)\}  \le \delta_G(w) \le e^{ q}\min\{\delta_G(x),\delta_G(y)\} \quad \text{for all } w\in \gamma.
		\end{equation*}
		\item[{\bf (ii)}] If $d(x,y)\leq \frac{q}{8\lambda}\max\{\delta_G(x),\delta_G(y)\}$, then the curve $\gamma$ is a length $c^\prime$-uniform curve, where
		\[
		c^\prime = \frac{8\lambda e^{2q}}{\log\left(1+\frac{q}{8\lambda}\right)}.
		\]
	\end{itemize} 
\end{lem}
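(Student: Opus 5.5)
The plan is to first bound $k_G(x,y)$ by a small number using Lemma~\ref{property of k}(3), and then convert the quasihyperbolic bound on $\gamma$ into metric bounds via \eqref{QH prop 1}.

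\textbf{Part (i).} Choose $z\in \overline{B}(x, \frac{q}{8\lambda}\delta_G(x)) \cap \overline{B}(y, \frac{q}{8\lambda}\delta_G(y))$ and assume, say, $\delta_G(x)\ge\delta_G(y)$. First compare $\delta_G(y)$ with $\delta_G(x)$. From $d(x,y)\le \frac{q}{8\lambda}(\delta_G(x)+\delta_G(y))\le \frac{q}{4\lambda}\delta_G(x)$ we get $\delta_G(y)\ge (1-\frac{q}{4\lambda})\delta_G(x)$. Hence both $x$ and $y$ lie in $\overline{B}(x,\frac{q}{4\lambda}\delta_G(x))$. This is the ball of Lemma~\ref{property of k}(3) with $t=1$, which is not allowed there. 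So I would instead center the ball at $z$, or use $t=1/2$ together with a slightly more careful triangle inequality. Writing $\delta_G(z)\ge (1-\frac{q}{8\lambda})\delta_G(x)$, we get $d(x,z),d(y,z)\le \frac{q}{8\lambda}\delta_G(x)\le \frac{tq}{4\lambda}\delta_G(z)$ for a suitable $t<1$, for instance $t=\frac{1}{2(1-q/8)}\le 4/7$. Then \eqref{QH prop 4} gives
\[
k_G(x,y)\le \frac{\lambda}{1-t}\,\frac{d(x,y)}{\delta_G(z)},
\]
which is at most a constant times $q/4$; the aim is to show $k_G(x,y)\le q/2$. Consequently $\ell_k(\gamma)\le 2k_G(x,y)\le q$.

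Now apply \eqref{QH prop 1} to subcurves of $\gamma$. For $w\in\gamma$, applying it to $\gamma[x,w]$ gives $|\log(\delta_G(x)/\delta_G(w))|\le q$, and similarly for $y$. This yields $e^{-q}\max\le\delta_G(w)\le e^q\min$. Applying it to the whole curve gives $\log(1+\ell(\gamma)/\min\{\delta_G(x),\delta_G(y)\})\le q$, hence $\ell(\gamma)\le (e^q-1)\min\{\delta_G(x),\delta_G(y)\}\le e^q\min\{\delta_G(x),\delta_G(y)\}$.

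The main obstacle is the constant bookkeeping, namely ensuring that $k_G(x,y)\le q/2$. If the constants do not close, I would fall back on a simpler route: bound $\ell(\gamma)\le\lambda d(x,y)$ directly using the local quasiconvex curve and $\ell_{qh}$ estimates, then pass to $\gamma$ through $\ell_k(\gamma)\le 2k_G$.

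\textbf{Part (ii).} Assume $\delta_G(x)\ge\delta_G(y)$. The hypothesis places $y$ in $\overline{B}(x,\frac{q}{8\lambda}\delta_G(x))$, so the balls in (i) intersect (they share the point $y$), and (i) applies. For uniformity we need $\ell(\gamma)\le c' d(x,y)$. To get it, combine three facts:
\[
\ell(\gamma)\le \delta_G(w)\,\ell_k(\gamma)\,e^{q}\cdot(\text{const}),\qquad \ell_k(\gamma)\le 2k_G(x,y)\le \frac{2\lambda}{1-t}\,\frac{d(x,y)}{\delta_G(x)},
\]
where the first uses the lower bound $\delta_G(w)\ge e^{-q}\delta_G(x)$ on $\gamma$, so that $\ell(\gamma)\le e^{q}\delta_G(x)\,\ell_k(\gamma)$. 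Multiplying out gives $\ell(\gamma)\lesssim \lambda e^{q} d(x,y)$, which is within $c'$.

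For the John condition at $w\in\gamma$, use $\min\{\ell(\gamma[x,w]),\ell(\gamma[w,y])\}\le\ell(\gamma)\le e^q\min\{\delta_G(x),\delta_G(y)\}\le e^{2q}\delta_G(w)$. The factor $1/\log(1+\frac{q}{8\lambda})\ge 8\lambda/q\ge1$ in $c'$ comfortably absorbs the remaining constants, so both conditions hold with $c'$.
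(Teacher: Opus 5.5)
There is a gap in part (i), at the step you flag and leave open: $k_G(x,y)\le q/2$. One application of Lemma~\ref{property of k}(3) on a ball centred at $z$ cannot give it. Write $s=\frac{q}{8\lambda}$ and assume $\delta_G(x)\ge\delta_G(y)$. The bounds you have are $d(x,z),d(y,z)\le s\,\delta_G(x)$, $d(x,y)\le 2s\,\delta_G(x)$ and $\delta_G(z)\ge(1-s)\,\delta_G(x)$. For the ball $\overline{B}(z,\frac{tq}{4\lambda}\delta_G(z))$ to contain $x$ and $y$ you need $t\ge\frac{1}{2(1-s)}$. Since $\frac{\lambda}{1-t}$ increases with $t$, the best you can get is
$k_G(x,y)\le\frac{2\lambda(1-s)}{1-2s}\cdot\frac{2s}{1-s}=\frac{q}{2}\bigl(1-\frac{q}{4\lambda}\bigr)^{-1}$.
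This is strictly larger than $q/2$ for every $q\in(0,1)$, and your choice $t=\frac{1}{2(1-q/8)}$ is no better. So you only get $\ell_k(\gamma)\le q'$ for some $q'>q$, and \eqref{QH prop 1} then gives $e^{\pm q'}$ rather than the stated $e^{\pm q}$. ``A constant times $q/4$'' is not enough here, because the constants in (i) are exactly what $\ell_k(\gamma)\le q$ yields.

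The missing idea is to split at $z$ instead of putting both points in one ball: $k_G(x,y)\le k_G(x,z)+k_G(z,y)$. The pair $\{x,z\}$ lies in $\overline{B}(x,\frac{q}{8\lambda}\delta_G(x))$, which is the ball of Lemma~\ref{property of k}(3) centred at $x$ with $t=\frac12$. Hence $k_G(x,z)\le 2\lambda\,d(x,z)/\delta_G(x)\le q/4$, and symmetrically $k_G(z,y)\le q/4$ using the ball centred at $y$. This is the paper's argument, and it gives $\ell_k(\gamma)\le 2k_G(x,y)\le q$.

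Your fallback can also be made to work, but as written it is garbled and not carried out. Note that $\gamma$ is the given near-geodesic, so its length cannot be bounded by $\lambda d(x,y)$ directly. Done properly, it runs as follows. Since $d(x,y)\le\frac{q}{4\lambda}\delta_G(x)<q\,\delta_G(x)$, local quasiconvexity at the centre $x$ gives a curve $\alpha$ with $\ell(\alpha)\le\frac q4\delta_G(x)$. Along $\alpha$ we have $\delta_G\ge(1-\frac q4)\delta_G(x)$, so $k_G(x,y)\le\ell_{\mathrm{qh}}(\alpha)\le\frac{q}{4-q}<\frac q2$. The rest of your (i), applying \eqref{QH prop 1} to $\gamma[x,w]$, $\gamma[w,y]$ and $\gamma$, matches the paper.

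Part (ii) is sound, and simpler than the paper's two-case argument. The bound $\ell(\gamma)\le\bigl(\max_{\gamma}\delta_G\bigr)\,\ell_k(\gamma)\le e^{q}\delta_G(x)\,\ell_k(\gamma)$ gives quasiconvexity with constant $4\lambda e^{q}$, without asking whether $\gamma$ leaves $\overline{B}(x,\frac{q}{8\lambda}\delta_G(x))$. This step uses the \emph{upper} bound $\delta_G(w)\le e^{q}\delta_G(x)$, not the lower bound as you wrote. Part (ii) also does not depend on the gap above. There $x,y\in\overline{B}(x,\frac{q}{8\lambda}\delta_G(x))$, so Lemma~\ref{property of k}(3) with $t=\frac12$ directly gives $\ell_k(\gamma)\le 4\lambda\,d(x,y)/\delta_G(x)\le q/2$.
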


\begin{proof}
	{\bf (i)} Since $ \overline{B}(x, \frac{q}{8\lambda}\,\delta_G(x)) \cap \overline{B}(y, \frac{q}{8\lambda}\,\delta_G(y)) \neq \emptyset$, there exists a point  \( z_{0} \in \overline{B}(x, \frac{q}{8\lambda}\,\delta_G(x)) \cap \overline{B}(y, \frac{q}{8\lambda}\,\delta_G(y)) \). Then, by \eqref{QH prop 4}, we have 
	\begin{equation*}\label{x,z_0 k upper bounded}
		k_G(x,z_0)\leq 2\lambda\frac{d(x,z_0)}{\delta_G(x)}\leq \frac{q}{4}
	\end{equation*}
	and
	\begin{equation*}\label{z_0,y k upper bonded}
		k_G(z_0,y)\leq 2\lambda\frac{d(z_0,y)}{\delta_G(y)}\leq \frac{q}{4}.
	\end{equation*}
	Therefore,
	\begin{equation}\label{ell-k}
		\ell_k(\gamma) \leq  2 k_G(x,y)\leq 2(k_G(x,z_0)+k_G(z_0,y))\leq q.
	\end{equation}
	Then it follows from \eqref{QH prop 1} and \eqref{ell-k} that for all $w\in \gamma$, 
	\begin{equation}\label{eq-x-to-y}
		\left| \log\frac{\delta_G(x)}{\delta_G(w)}\right| \leq\ell_k(\gamma[x, w])\leq \ell_k(\gamma)\leq q.
	\end{equation}
	This implies that for all $w\in \gamma$, 
	\begin{equation*}
		e^{-q}\,\delta_G(x) \le \delta_G(w) \le e^{q}\,\delta_G(x).
	\end{equation*}
	By the same reasoning, \eqref{eq-x-to-y} holds with $x$ and $\gamma[x, w]$ replaced by $y$ and $\gamma[w, y]$, respectively. Therefore, we obtain \begin{equation*}
		e^{-q}\,\delta_G(y) \le \delta_G(w) \le e^{q}\,\delta_G(y).
	\end{equation*}
	
	Moreover, applying \eqref{QH prop 1} and \eqref{ell-k} again, we have
	\begin{equation*}
		\log\left( 1+\frac{\ell(\gamma)}{\min\{\delta_G(x),\delta_G(y)\}}\right)  \le \ell_k(\gamma) \leq q,
	\end{equation*}
	which  gives 
	$\ell(\gamma)\leq e^{q}\min\{\delta_G(x),\delta_G(y)\}.$ Hence the proof of statement {\bf (i)} is complete.
	\medskip
	
	{\bf (ii)} If $d(x,y)\leq \frac{q}{8\lambda}\max\{\delta_G(x),\delta_G(y)\}$,  then $\overline{B}(x, \frac{q}{8\lambda}\,\delta_G(x)) \cap \overline{B}(y, \frac{q}{8\lambda}\,\delta_G(y)) \neq \emptyset$. 
	Without loss of generality, we may assume that $\delta_G(x)\geq\delta_G(y)$. Then $y\in \overline{B}(x, \frac{q}{8\lambda}\,\delta_G(x))$.
	 Combining the fact that $\ell_k(\gamma) \leq  2 k_G(x,y)$ with  \eqref{QH prop 4} yields 
	\begin{equation}\label{beta lk qc}
		\ell_k(\gamma)\leq 2 k_G(x,y)\leq 4 \lambda\frac{d(x,y)}{\delta_G(x)}\leq \frac{ q}{2}.
	\end{equation}
	
	To prove that $\gamma$ is quasiconvex, we consider the following two cases.
	If \(\gamma \subset \overline{B}(x,\frac{q}{8\lambda}\,\delta_G(x)) \), then for each $w\in\gamma$, we have
	\begin{equation}\label{lem4.4 delta w delta x}
		\delta_G(w) \leq \delta_G(x)+d(x,w)\leq\left( 1+\frac{q}{8\lambda}\right) \delta_G(x).
	\end{equation}
	Then it follows from \eqref{beta lk qc} and \eqref{lem4.4 delta w delta x} that 
	\begin{equation}\label{lem4.4 qc-1}
		\ell(\gamma) \leq \left( 1+\frac{q}{8\lambda}\right) \delta_G(x) \ell_k(\gamma) \leq  \left( 4\lambda+\frac{q}{2}\right) d(x,y).
	\end{equation}

	If \(\gamma \not\subset \overline{B}( x,\frac{q}{8\lambda}\,\delta_G(x)) \), then we can choose a point \(z_0 \in \gamma \setminus \overline{B}( x,\frac{q}{8\lambda}\,\delta_G(x)) \).
	Applying \eqref{QH prop 1} and \eqref{beta lk qc}, we obtain
	
	\[
	\log\left( 1+\frac{q}{8\lambda}\right)  \leq \log\left( 1+\frac{d(x,z_0)}{\delta_G(x)}\right)\leq  \ell_k(\gamma) \leq  4\lambda\frac{d(x,y)}{\delta_G(x)}.
	\]
	Therefore, $$\delta_G(x)\leq\frac{4 \lambda}{\log\left( 1+\frac{q}{8\lambda}\right) }d(x,y).$$
	Combining this with \eqref{QH prop 1} and \eqref{beta lk qc} yields
	\begin{equation}\label{lem4.4-qc-2}
		\ell(\gamma)\leq e^{\ell_k(\gamma)}\,\delta_G(x)\leq  \frac{4 \lambda e^{ q/2} }{\log\left( 1+\frac{q}{8\lambda}\right) }d(x,y)\leq \frac{8\lambda e^{2q} }{\log\left( 1+\frac{q}{8\lambda}\right) }d(x,y) =: c^\prime d(x, y).
	\end{equation}
	Clearly, $c^\prime\geq8\lambda\geq 4\lambda+q/2$.
	Thus, \eqref{lem4.4 qc-1} and \eqref{lem4.4-qc-2} show that $\gamma$ is $c^\prime$-quasiconvex. 
	
	Moreover, by the statement {\bf (i)},  for all $w\in\gamma$ we have
	$$\ell(\gamma)\leq e^{q}\,\delta_G(x)\leq e^{ 2q}\,\delta_G(w),$$
	which implies that $\gamma$ is a length $e^{2 q}$-John curve. 
	
	Recalling that $\gamma$ is a $c'$-quasiconvex curve and $e^{2q}\leq c^\prime$, it follows that $\gamma$ is a length $c^\prime$-uniform curve.
\end{proof}

\begin{lem}\rm\label{pre-thm}
	Let $Q>1$ and let $X$ be a $Q$-doubling metric space. Suppose that \( G\subsetneq X \)  is a locally $(q,\lambda)$-quasiconvex domain. Let $x, y\in G$ be points satisfying $\delta_G(y)\leq 2\,\delta_G(x)$ and $d(x,y)>\frac{q}{8\lambda}\max\{\delta_G(x),\delta_G(y)\}$, and let $c\geq 1$. If $x$ and $y$ can be joined by a rectifiable curve $\gamma$ such that  
	$$	\diam(\gamma[x, w])\leq c\, \delta_G(w) \quad \text{for all } w\in \gamma,$$
	then there exists a rectifiable curve $\beta$ joining $x$ and $y$ such that  $\delta_G(x)\leq (1+c)e^q\,\delta_G(w)$ for all $w\in \beta$, and
	\begin{equation*}
		\ell(\beta)\leq 6ce^{q} Q^{\,\lfloor\log_2 (24\lambda c(1+c)/q)\rfloor}\, \delta_G(x).
	\end{equation*} 
\end{lem}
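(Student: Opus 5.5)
Apply Lemma~\ref{ball sequence} to $\gamma$ with $\mu=\frac{q}{8\lambda}$, which is allowed since $d(x,y)>\frac{q}{8\lambda}\max\{\delta_G(x),\delta_G(y)\}$. This produces points $x=z_1,\dots,z_k,z_{k+1}=y$ on $\gamma$ and closed balls $\overline{B}_i=\overline{B}(z_i,\mu\,\delta_G(z_i))$. Consecutive balls meet, non-consecutive balls are disjoint, and $y\in\overline{B}_k$.

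Each consecutive pair $z_i,z_{i+1}$ satisfies the hypothesis of Lemma~\ref{two-ball-int}(i): for $i<k$ the balls $\overline{B}_i$ and $\overline{B}_{i+1}$ intersect, and for $i=k$ we have $y\in\overline{B}_k$. So we may join $z_i$ to $z_{i+1}$ by a curve $\beta_i$ with
\[
\ell(\beta_i)\le e^q\min\{\delta_G(z_i),\delta_G(z_{i+1})\}\quad\text{and}\quad \delta_G(w)\ge e^{-q}\max\{\delta_G(z_i),\delta_G(z_{i+1})\}\ \text{ for } w\in\beta_i.
\]
Set $\beta=\beta_1\ast\cdots\ast\beta_k$.

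\emph{Lower bound on $\delta_G$ along $\beta$.} The diameter John hypothesis gives, for every $i$,
\[
\delta_G(x)\le d(x,z_i)+\delta_G(z_i)\le \diam(\gamma[x,z_i])+\delta_G(z_i)\le (1+c)\,\delta_G(z_i).
\]
Hence for $w\in\beta_i$ we get $\delta_G(w)\ge e^{-q}\delta_G(z_i)\ge e^{-q}\delta_G(x)/(1+c)$, which is the first claim.

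\emph{Length bound.} It suffices to bound $k$ and each $\delta_G(z_i)$.

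\begin{itemize}
\item For $i\le k$, $\delta_G(z_i)\le c^{-1}$-free bound: $\delta_G(z_i)\le \delta_G(x)+d(x,z_i)\le \delta_G(x)+\diam\gamma$.
\item Applying the hypothesis at $w=y$ gives $\diam\gamma\le c\,\delta_G(y)\le 2c\,\delta_G(x)$. This is where $\delta_G(y)\le2\delta_G(x)$ is used.
\item Thus $\delta_G(z_i)\le (1+2c)\delta_G(x)\le 3c\,\delta_G(x)$, and $\ell(\beta_i)\le 3ce^q\delta_G(x)$.
\end{itemize}

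To count the balls, use the disjointness in Lemma~\ref{ball sequence}(3). The balls with odd index are pairwise disjoint, and so are those with even index. Each has radius $\mu\,\delta_G(z_i)\ge \mu\,\delta_G(x)/(1+c)=:r$, so each contains the concentric ball of radius $r$. All centers lie in $\gamma\subset \overline{B}(x,2c\,\delta_G(x))$, so these smaller balls lie in $B(x,R)$ with $R\approx 3c\,\delta_G(x)$. Lemma~\ref{doubling property} with $a=R/r=24\lambda c(1+c)/q$ then shows that each parity class has at most $Q^{\lfloor\log_2 a\rfloor}$ members, so $k\le 2Q^{\lfloor\log_2 a\rfloor}$. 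Multiplying,
\[
\ell(\beta)\le k\cdot 3ce^q\delta_G(x)\le 6ce^qQ^{\lfloor\log_2(24\lambda c(1+c)/q)\rfloor}\delta_G(x).
\]

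The main obstacle is this counting step: choosing $R$ and $r$ so the constant matches exactly. The containment radius $2c\,\delta_G(x)+r$ must stay below $3c\,\delta_G(x)$, and one needs open versus closed balls, hence disjointness of the shrunken open balls. The odd/even split is what accounts for the factor $2$ inside the $6$. Equivalently, one can use $\min\{\delta_G(z_i),\delta_G(z_{i+1})\}$ in the length bound and adjust $R$ accordingly.
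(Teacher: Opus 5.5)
Your proposal is correct and follows the paper's proof essentially step by step. It uses the same ball chain from Lemma~\ref{ball sequence} with $\mu=q/(8\lambda)$, the same connecting curves from Lemma~\ref{two-ball-int}(i), the bound $\diam\gamma\le c\,\delta_G(y)\le 2c\,\delta_G(x)$, and the same odd/even counting with $R=3c\,\delta_G(x)$ and $a=24\lambda c(1+c)/q$. Your explicit shrinking to concentric balls of radius $\mu\,\delta_G(x)/(1+c)$ simply makes precise what the paper does implicitly through the inequality $R/a\le \frac{q}{8\lambda}\,\delta_G(z_i)$.
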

\begin{proof}
	Applying Lemma~\ref{ball sequence} with $\mu=\frac{q}{8\lambda}$, one can find a sequence of points $\{z_i\}_{i=1}^k$ on $\gamma$ and a sequence of balls $\{\overline{B}_i=\overline{B}(z_i,r_i)\}_{i=1}^{k}$ with $r_i=\frac{q}{8\lambda}\,\delta_G(z_i)$ satisfying the description given in Lemma~\ref{ball sequence}.
	 Since 	$\diam(\gamma[x, w])\leq c \,\delta_G(w)$ for all  $w\in \gamma$  and $\delta_G(y) \le 2\,\delta_G(x)$, we have
	\begin{equation}\label{x,zi upper bounded}
		d(x, z_i) \le \diam(\gamma[x,z_i]) \le \diam(\gamma)\leq c\,\delta_G(y)\leq 2c\,\delta_G(x)
	\end{equation}
	and hence
	$$
	\frac{8\lambda}{q}r_i=\delta_G(z_i) \le \delta_G(x) + d(x, z_i) \le (1+2c)\,\delta_G(x).
	$$
	Therefore, 
	$$
	d(x, z_i) + r_i \le\left(2c+\frac{q(1+2c)}{8\lambda}\right)\delta_G(x)< 3c\,\delta_G(x)
	$$
	which implies
	\begin{equation}\label{ball in big ball}
		\overline{B}(z_i, r_i) \subset {B}(x, 3c\,\delta_G(x)).
	\end{equation}
	
	Let \(R=3c\,\delta_G(x)\) and \(a=24\lambda c(1+c)/q\).
	Since $$ \delta_G(x) \leq \delta_G(z_i) + d(x, z_i) \le \delta_G(z_i) + \diam(\gamma[x, z_i]) \le (1+c)\,\delta_G(z_i), $$ it follows that
	\[
	\frac{R}{a}=\frac{3qc\,\delta_G(x)}{24\lambda c(1+c)} \le \frac{q}{8\lambda}\,\delta_G(z_i).
	\]
	
To deduce the upper bound on $k$, observe that, by property (3) of Lemma~\ref{ball sequence}, balls with
non-adjacent indices are pairwise disjoint. Hence the odd-indexed family ${\overline{B}_1, \overline{B}_3, \overline{B}_5, \ldots}$ and
the even-indexed family ${\overline{B}_2, \overline{B}_4, \overline{B}_6, \ldots}$ are each pairwise disjoint. Applying Lemma~\ref{doubling property} to each family separately, each contains at most $Q^{\lfloor\log_2 a\rfloor}$ balls, yielding
   	\begin{equation}\label{k upper bounded}
		k \leq 2Q^{\,\lfloor\log_2 (24\lambda c(1+c)/q)\rfloor}.
	\end{equation}
	
By Lemma~\ref{ball sequence}(2) and Lemma~\ref{ball sequence}(4), for each  $1\leq i\leq k$, we have $\overline{B}(z_i, \frac{q}{8\lambda}\,\delta_G(z_i)) \cap \overline{B}(z_{i+1}, \frac{q}{8\lambda}\,\delta_G(z_{i+1})) \neq \emptyset$. Applying  Lemma~\ref{two-ball-int}{\bf (i)} yields that for each  $1\leq i\leq k$, there exists a rectifiable curve $\alpha_i$ joining $z_i$ to $z_{i+1}$ in $G$ such that for any $w\in \alpha_i$,
\begin{equation}\label{w zi}
	e^{-q}\,\delta_G(z_i) \le \delta_G(w) \le e^{q}\,\delta_G(z_i)
\end{equation}
and
\begin{equation}\label{two ball John}
	\ell(\alpha_i)	\leq e^{q}\,\delta_G(z_{i}).
\end{equation}
	
	Let $\beta=\bigcup\limits_{i=1}^{k}\alpha_i$. 
	For each \( w \in \beta \), there exists \( t \in \{1, \dots, k\} \) such that \( w \in \alpha_t\).  Since $\diam(\gamma[x, z_t])\leq c\,\delta_G(z_t)$,  it follows from  \eqref{w zi} that
	$$\delta_G(x)\leq \delta_G(z_t)+\diam (\gamma[x, z_t])\leq (1+c)\, \delta_G(z_t) \leq (1+c) e^q\, \delta_G(w).$$
Moreover, by \eqref{x,zi upper bounded}, \eqref{k upper bounded} and \eqref{two ball John}, we obtain
\begin{align*}
			\ell(\beta) &\leq \sum_{i=1}^k \ell(\alpha_i) \leq e^{q}\sum_{i=1}^k \delta_G(z_{i}) \leq e^{q}\sum_{i=1}^k \left( \delta_G(x)+d(x, z_i)\right) \\
			&\leq e^{q}k(1+2c)\, \delta_G(x) \leq 3ce^{q}k \,\delta_G(x)\leq 6ce^{q} Q^{\,\lfloor\log_2 (24\lambda c(1+c)/q)\rfloor} \,\delta_G(x).
\end{align*}	
Hence the proof is complete.
\end{proof}

\begin{lem}\rm\label{diameter implies length}
Let $Q>1$ and let $X$ be a $Q$-doubling metric space. Suppose that \( G\subsetneq X \)  is a locally $(q,\lambda)$-quasiconvex domain. Then the following hold:
\begin{itemize}
	\item[(1)] If $G$ is a diameter John domain, then $G$ is length John;
	
	\item[(2)] If $G$ is a diameter uniform domain, then $G$ is length uniform.
	\end{itemize}
\end{lem}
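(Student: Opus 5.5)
The plan is to take a diameter $c$-John (resp.\ $c$-uniform) curve $\gamma$ joining $x,y$, cut it at a suitable point, and on each half replace it by the curve $\beta$ produced by Lemma~\ref{pre-thm}, applied on dyadic scales. If $d(x,y)\le \frac{q}{8\lambda}\max\{\delta_G(x),\delta_G(y)\}$, Lemma~\ref{two-ball-int}{\bf (ii)} already gives a length $c'$-uniform curve, hence also a length John curve. So we assume $d(x,y)>\frac{q}{8\lambda}\max\{\delta_G(x),\delta_G(y)\}$.

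For (1), choose a point $x_0\in\gamma$ at which the two quantities $\diam(\gamma[x,z])$ and $\diam(\gamma[z,y])$ cross; then on $\gamma[x,x_0]$ we have $\diam(\gamma[x,w])\le c\,\delta_G(w)$, and symmetrically on $\gamma[x_0,y]$. It then suffices to build, from $x_0$ to $x$, a curve $\beta_x$ with $\ell(\beta_x[x,w])\le C\delta_G(w)$, and likewise for $y$; concatenating gives a length John curve. To build $\beta_x$, parametrize $\gamma[x,x_0]$ from $x$ and pick points $x=w_0,w_1,\dots,w_m=x_0$ where $w_{j+1}$ is the first point after $w_j$ with $\delta_G(w_{j+1})=2\delta_G(w_j)$ (last piece possibly shorter). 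Since $\delta_G(w)\ge \diam(\gamma[x,w])/c$, along each piece $\gamma[w_j,w_{j+1}]$ we keep $\delta_G\le 2\delta_G(w_j)$ up to the endpoint, and (after a reverse-orientation trick) the hypothesis of Lemma~\ref{pre-thm} holds with base point $w_{j+1}$ or $w_j$ with constant about $2c+1$, since $\diam\gamma[w_j,w]\le\diam\gamma[x,w]\le c\delta_G(w)$. Pieces with $d(w_j,w_{j+1})$ small are handled by Lemma~\ref{two-ball-int}{\bf (ii)}. Lemma~\ref{pre-thm} yields curves $\beta_j$ with $\ell(\beta_j)\le C_1\delta_G(w_{j})$ and $\delta_G\ge \delta_G(w_j)/C_2$ on $\beta_j$. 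Then for $w\in\beta_j$, $\ell(\beta_x[x,w])\le C_1\sum_{i\le j}\delta_G(w_i)\le 2C_1\delta_G(w_j)\le 2C_1C_2\delta_G(w)$, a geometric sum because $\delta_G(w_i)=2^{i}\delta_G(x)$.

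The main obstacle is that $\delta_G$ along $\gamma$ need not be monotone, so the ``first time $\delta_G$ doubles'' decomposition has to be arranged so that each piece satisfies the hypothesis $\delta_G(\text{end})\le 2\delta_G(\text{start})$ and the diameter condition measured from the correct endpoint. I would first try choosing $w_{j+1}$ as the last point of $\gamma[x,x_0]$ with $\delta_G\le 2\delta_G(w_j)$ after $w_j$, so that beyond it $\delta_G>2\delta_G(w_j)$, making $\delta_G(w_j)$ grow geometrically; the diameter bound from $x$ dominates the one from $w_j$, so Lemma~\ref{pre-thm} applies (with $c$ replaced by $c$ and base point $w_j$). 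The near-$x_0$ terminal piece is absorbed since $\delta_G(x_0)\gtrsim\diam\gamma/c$.

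For (2), the same curve is length John; it remains to check quasiconvexity. Since $\diam\gamma\le c\,d(x,y)$, we get $\delta_G(x_0)\le \delta_G(x)+\diam\gamma$. Using $d(x,y)>\frac{q}{8\lambda}\max\{\delta_G(x),\delta_G(y)\}$, we obtain $\delta_G(x_0)\lesssim d(x,y)$. The total length is at most $4C_1\delta_G(x_0)$ by the geometric sum, hence at most $C\,d(x,y)$, giving length uniformity.
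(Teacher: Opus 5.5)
Your proposal follows essentially the paper's route. Case 1 goes through Lemma~\ref{two-ball-int}{\bf (ii)}; then you split the diameter John curve at a point $x_0$ with $\diam(\gamma[x,x_0])=\diam(\gamma[x_0,y])$, take first-hitting points with $\delta_G(w_{j+1})=2\,\delta_G(w_j)$, apply Lemma~\ref{pre-thm} on each piece with base point $w_j$, sum geometrically, and use $\delta_G(x_0)\lesssim d(x,y)$ for quasiconvexity. The non-monotonicity of $\delta_G$ is not actually an obstacle, since Lemma~\ref{pre-thm} only needs the endpoint comparison, so your first-hit choice (which is the paper's) already works.

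One small slip in (2): the bound $\ell\le 4C_1\,\delta_G(x_0)$ fails when $\delta_G(x_0)<\delta_G(x)$ (the paper's Subcase 2.1). In that case you have directly $\ell\lesssim\delta_G(x)<\frac{8\lambda}{q}\,d(x,y)$, so the conclusion still holds.
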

\begin{proof} (1) Assume that $G$ is a diameter $c$-John domain with $c\geq 1$. To show that $G$ is length John, it suffices to show that there exists a constant $c'\geq 1$  such that  each pair of points $x, y\in G$ can be joined by a length $c'$-John curve.
 
Let $x,y\in G$. We consider the following cases.

{\bf Case 1.}  \( d(x,y) \leq \frac{q}{8\lambda} \max\{\delta_G(x),\delta_G(y) \}\).  By Lemma \ref{two-ball-int}{\bf (ii)}, there exists a rectifiable curve $\beta$ that is a length $c_1'$-uniform curve, and consequently a length $c_1'$-John curve, where
$$c_1^\prime=\frac{8 \lambda e^{2 q} }{\log\left( 1+\frac{q}{8\lambda}\right)}.$$

{\bf Case 2.}  \( d(x,y) > \frac{q}{8\lambda} \max\{\delta_G(x),\delta_G(y) \}\).  
Since $G$ is diameter $c$-John, there exists a curve $\zeta$ joining $x$ and $y$ such that 	for all \( w \in \zeta \),
	\begin{equation}\label{John-thm}
	\min\{\diam(\zeta[x,w]) , \diam(\zeta[w,y])\} \leq c \, \delta_G(w) 
	\end{equation} 
	Let \( w_0 \in \zeta \) be such that
	\[
	\diam(\zeta[x, w_0]) = \diam(\zeta[y, w_0]).
	\]
	Then for all  $w \in \zeta[x, w_0]$, it follows from \eqref{John-thm} that
	\begin{equation}\label{diam john}
	d(x, w) \leq \diam(\zeta[x,w]) \leq c\,\delta_G(w).
	\end{equation}	
Then we divide this case into the following two subcases.

	\underline{Subcase 2.1.} \(\delta_G(w_0) \le 2\,\delta_G(x) \). If $d(x,w_0)\leq \frac{q}{8\lambda}\max\{\delta_G(x),\delta_G(w_0)\}$, then $\overline{B}(x, \frac{q}{8\lambda}\,\delta_G(x)) \cap \overline{B}(w_0, \frac{q}{8\lambda}\,\delta_G(w_0)) \neq \emptyset$. Applying Lemma~\ref{two-ball-int}(i),  we obtain a rectifiable curve $\beta_1$ joining $x$ and $w_0$ such that 
	\begin{equation}\label{subcase2.1 beta1 x1}
		\ell(\beta_1)\leq e^{q}\min\{\delta_G(x),\delta_G(w_0)\}\leq e^q\delta_G(x)
	\end{equation}
	and
	\begin{equation}\label{subcase2.1 x w01}
		\delta_{G}(x)\leq e^q\delta_{G}(w)  
	\end{equation}
	for all $w\in\beta_1$.
	
	If $d(x,w_0)> \frac{q}{8\lambda}\max\{\delta_{G}(x),\delta_{G}(w_0)\}$, applying Lemma \ref{pre-thm} with $y=w_0$, then there is a rectifiable curve $\beta_1$ joining $x$ and $w_0$ such that  
	\begin{equation}\label{subcase2.1 x w02}
	\delta_G(x)\leq (1+c)e^q\,\delta_G(w)
	\end{equation}
	 for all $w\in \beta_1$, and
	\begin{equation}\label{ell-beta-1}
		\ell(\beta_1)\leq 6ce^{q} Q^{\,\lfloor\log_2 (24\lambda c(1+c)/q)\rfloor}\, \delta_G(x).
	\end{equation}
	 
	Thus, by combining \eqref{subcase2.1 beta1 x1}, \eqref{subcase2.1 x w01}, \eqref{subcase2.1 x w02} and \eqref{ell-beta-1}, we obtain a rectifiable curve $\beta_1$ joining $x$ and $w_0$ such that 
	\begin{equation}\label{ell-beta-2}
		\ell(\beta_1)\leq 6ce^{q} Q^{\,\lfloor\log_2 (24\lambda c(1+c)/q)\rfloor}\, \delta_G(x)
	\end{equation} 
	and for all $w\in \beta_1$,
	\begin{equation}\label{subcase2.1 x w03}
		\delta_G(x)\leq (1+c)e^q\,\delta_G(w).
	\end{equation}

	Consequently, for all $w\in \beta_1$,
\begin{equation}\label{case 11 john}
		\ell(\beta_1[x, w])\leq 	\ell(\beta_1)\leq 6ce^{q} Q^{\,\lfloor\log_2 (24\lambda c(1+c)/q)\rfloor} \,\delta_G(x)\leq  6c(1+c)e^{2q} Q^{\,\lfloor\log_2 (24\lambda c(1+c)/q)\rfloor}\, \delta_G(w).
\end{equation}
Moreover, since \( d(x,y) > \frac{q}{8\lambda} \max\{\delta_G(x),\delta_G(y) \}\), it follows from \eqref{ell-beta-2} that
\begin{equation}\label{case 11 qc}
		\ell(\beta_1)\leq \frac{48c\lambda e^{q}}{q} Q^{\,\lfloor\log_2 (24\lambda c(1+c)/q)\rfloor}\, d(x, y).
\end{equation}

For the same reason, there is a curve $\beta_2$ joining $y$ and $w_0$ such that for all $w\in \beta_2$,
\begin{equation}\label{case 12 join}
\ell(\beta_2[y,w])\leq  6c(1+c)e^{2q} Q^{\,\lfloor\log_2 (24\lambda c(1+c)/q)\rfloor} \,\delta_G(w).
\end{equation}
and 
\begin{equation}\label{case 12 qc}
\ell(\beta_2)\leq \frac{48c\lambda e^{q}}{q} Q^{\,\lfloor\log_2 (24\lambda c(1+c)/q)\rfloor} \,d(x, y).
\end{equation}

By \eqref{case 11 john}, \eqref{case 11 qc}, \eqref{case 12 join}  and \eqref{case 12 qc}, the curve $\beta=\beta_1\cup \beta_2$ is a length $c'_2$-uniform curve and hence a length $c'_2$-John curve, where
 $$c'_2=\frac{96c(1+c)\lambda e^{2q}}{q}  Q^{\,\lfloor\log_2 (24\lambda c(1+c)/q)\rfloor}.$$

\medskip
	\underline{Subcase 2.2.} \( \delta_G(w_0) > 2\,\delta_G(x) \).
	Then there exists an integer \(N\geq2\) such that
	\begin{equation}\label{w0 Nx}
		2^{N-1}\,\delta_G(x) <\delta_G(w_0) \le 2^{N}\,\delta_G(x).
	\end{equation}
Let $x_0=x$. For $1\leq j\leq N-1$, choose $x_j$ to be the first point on  $\zeta[x,w_0]$ along the direction from $x$ to $w_0$ such that 	$\delta_G(x_j)=2^{j}\,\delta_G(x).$ 
It follows from \eqref{w0 Nx} and the connectedness of $\zeta$ that such $x_j$ exists and $x_{N-1}\not=w_0$. By setting $x_N=w_0$,  we find a sequence of points $x=x_0, x_1, x_2,\dots,x_{N}=w_0$. Clearly, 
	\begin{equation}\label{xN-2}
	\delta_G(x_{j+1})= 2\,\delta_G(x_{j}),\quad 0\leq j\leq N-2
	\end{equation}
	and
	\begin{equation}\label{xN-1}
	\delta_G(x_{N-1})<\delta_G(x_{N})\leq 2\,\delta_G(x_{N-1}).
\end{equation}

For $0\leq j\leq N-1$,  it follows from \eqref{diam john} that for all $w\in \zeta[x_j, x_{j+1}]$, 
$$\diam (\zeta[x_j, w])\leq \diam(\zeta[x, w])\leq c \,\delta_G(w).$$
Then the same reasoning that leads to \eqref{ell-beta-2} and \eqref{subcase2.1 x w03} yields that there exists a rectifiable curve $\beta_j$ joining $x_j$ and $x_{j+1}$ such that  
\begin{equation}\label{delta xj w}
	\delta_G(x_j)\leq (1+c)e^q\,\delta_G(w)  \quad \text{for all } w\in \beta_j,
\end{equation} 
and
\begin{equation}\label{beta j delta xj}
	\ell(\beta_j)\leq 6ce^{q} Q^{\,\lfloor\log_2 (24\lambda c(1+c)/q)\rfloor} \,\delta_G(x_j).
\end{equation}

	Let $\beta_{1}^\prime=\bigcup_{j=0}^{N-1}\beta
	_j$. Set $b=6ce^{q} Q^{\,\lfloor\log_2 (24\lambda c(1+c)/q)\rfloor} $. For each $w \in \beta_{1}^\prime$, there exists $t\in \{0, \dots, N-1\}$ such that
	$w \in \beta_t$. Then, it follows from \eqref{xN-2}, \eqref{delta xj w} and \eqref{beta j delta xj}  that
	\begin{equation}\label{case 2 1 john}
		\ell\left(\beta_{1}^\prime[x, w]\right) \leq  \sum_{j=0}^{t}\ell(\beta_j ) \leq b \sum_{j=0}^{t} \delta_G(x_j)\leq 2b\,\delta_G(x_{t})\leq 2b(1+c)e^q\,\delta_G(w).
\end{equation}

For the same reason, there is a curve $\beta^\prime_{2}$ joining $y$ and $w_0$ such that for all $w\in\beta^\prime_{2}$, 
\begin{equation}\label{case 2 2 join}
\ell\left(\beta^\prime_{2}[y,w]\right)\leq  2b(1+c)e^q\,\delta_G(w).
\end{equation} 
Therefore, it follows from \eqref{case 2 1 john} and \eqref{case 2 2 join} that $\beta=\beta_{1}^\prime\cup\beta_{2}^\prime$ is a length $c'_3$-John curve, where
 $$c'_3=12c(1+c)e^{q} Q^{\,\lfloor\log_2 (24\lambda c(1+c)/q)\rfloor}.$$ 
 
 Combining these cases, with $c' = \max\{c'_1, c'_2, c'_3\}$,  we have shown that each pair of points $x, y\in G$ can be joined by a length $c'$-John curve, which completes the proof of $(1)$.
\medskip

 (2) Assume that $G$ is a diameter $c$-uniform domain with $c\ge 1$. To prove that it is a length uniform domain, it suffices to show that there exists a constant $c''\ge 1$ such that every pair of points $x,y\in G$ can be joined by a length $c''$-uniform curve. To this end, let $x,y\in G$ and follow the same proof of (1) by considering two cases: Case 1 and Case 2, where Case 2 consists of Subcase 2.1 and Subcase 2.2.
	
In Case 1 and Subcase 2.1, the rectifiable curve $\beta$ constructed in the corresponding case of (1) is already a length $c'$-uniform curve. Hence nothing remains to be proved in these two situations.

It now remains to consider Subcase 2.2. Since the rectifiable curve $\beta = \beta'_1 \cup \beta'_2$ in the corresponding case of (1) has been shown to be a length $c'$-John curve, it suffices to show that this curve is quasiconvex.

Since $G$ is a diameter $c$-uniform domain (hence also a diameter $c$-John domain), the curve $\zeta$ in \eqref{John-thm} additionally satisfies
$$\diam(\zeta)\leq c\, d(x, y).$$
Since we are in Case 2, we have $\delta_G(x)< \frac{8\lambda}{q}\, d(x, y)$. This, together with $\diam(\zeta)\leq c \,d(x, y)$, yields 
$$\delta_G(w_0)\leq \delta_G(x)+d(x,w_0)\leq \delta_G(x)+\diam(\zeta)\leq \left(  \frac{8\lambda}{q}+c\right) \, d(x,y).$$
Consequently, it follows from \eqref{xN-2}, \eqref{xN-1} and \eqref{beta j delta xj} that
\begin{equation*}\label{2 case 2 1 qc}
	\ell(\beta'_1)=\sum_{j=0}^{N-1} \ell(\beta_j) \leq b\sum_{j=0}^{N-1} \delta_G(x_j) \leq 2b\, \delta_G(x_{N-1})<2b\,\delta_G(w_0)\leq 2b\left(  \frac{8\lambda}{q}+c\right) \, d(x,y).
\end{equation*}
For the same reason, we obtain the same estimate for $\ell(\beta'_{2})$. Therefore,
$$\ell(\beta)=\ell(\beta'_1)+\ell(\beta'_2)\leq 4b\left( \frac{8\lambda}{q}+c\right)\, d(x,y)\leq  24ce^{q} Q^{\,\lfloor\log_2 (24\lambda c(1+c)/q)\rfloor}\left(  \frac{8\lambda}{q}+c\right) \, d(x,y),$$
which proves the quasiconvexity of $\beta$. 

Combining these cases, we have shown that each pair of points $x, y\in G$ can be joined by a length $c''$-uniform curve, where 
$$c''=\max\left\{c', 24ce^{q} Q^{\,\lfloor\log_2 (24\lambda c(1+c)/q)\rfloor}\Big( \frac{8\lambda}{q}+c\Big)\right\}.$$
This completes the proof of $(2)$.
\end{proof}

Adapting the argument from \cite[pp.~236-237]{Va04} (originally for Banach spaces) to the setting of metric spaces, we prove that a domain in a metric space is distance uniform if and only if it is diameter uniform. For the convenience of the reader, we provide a detailed proof of this result here.

\begin{lem}\rm\label{distance uniform is diameter uniform}
	Suppose that $X$ is a metric space. Then \( G\subsetneq X \) is a distance uniform domain if and only if it is diameter uniform.
\end{lem}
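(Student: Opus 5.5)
The implication ``diameter uniform $\Rightarrow$ distance uniform'' is immediate. For a diameter $c$-uniform curve $\gamma$ and $z\in\gamma$ we have $d(x,z)\le \diam(\gamma[x,z])$ and $d(z,y)\le\diam(\gamma[z,y])$. The global condition $\diam(\gamma)\le c\,d(x,y)$ is identical in both definitions. So every diameter $c$-uniform curve is a distance $c$-uniform curve, and the work lies in the converse.

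For the converse, assume $G$ is distance $c$-uniform, fix $x,y\in G$, and let $\gamma$ be a distance $c$-uniform curve joining them. I will show that the same curve $\gamma$ is diameter $c'$-uniform for some $c'$ depending only on $c$. The global condition $\diam(\gamma)\le c\,d(x,y)$ is already available, so only the local condition needs checking. Fix $z\in\gamma$ and assume without loss of generality that $d(x,z)\le d(z,y)$. The plan is to split according to the size of $d(x,z)$ relative to $d(x,y)$.

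\emph{Case A: $d(x,z)\ge d(x,y)/(4c)$.} Here the global bound gives $\diam(\gamma[x,z])\le\diam(\gamma)\le c\,d(x,y)\le 4c^2 d(x,z)$, and the distance condition gives $d(x,z)\le c\,\delta_G(z)$. Hence $\diam(\gamma[x,z])\le 4c^3\delta_G(z)$.

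\emph{Case B: $d(x,z)<d(x,y)/(4c)$.} Take any $u\in\gamma[x,z]$; it suffices to bound $d(x,u)$ by a multiple of $\delta_G(z)$, since then $\diam(\gamma[x,z])\le 2\sup_u d(x,u)$.

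This case is the main obstacle. A point $u$ on $\gamma[x,z]$ could in principle travel far from $x$, giving large $d(x,u)$, while $\delta_G(z)$ stays small. I will exclude this using the distance condition at $u$ together with the triangle inequality through $z$.

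First suppose $d(x,u)\le d(u,y)$. Then $d(x,u)\le c\,\delta_G(u)\le c\,(\delta_G(z)+d(u,z))$. This looks circular, since $d(u,z)\le d(u,x)+d(x,z)$. To break the circularity I will argue by a sup/continuity argument instead. Let $u^*$ be a point of $\gamma[x,z]$ maximizing $d(x,\cdot)$, and compare via $\delta_G(u^*)\le \delta_G(x)+d(x,u^*)$. This also seems weak.

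A cleaner route, following V\"ais\"al\"a, is to first secure the alternative. The condition $d(u,y)<d(x,u)$ would force $d(x,u)>d(x,y)/2$. Since $z$ lies beyond $u$ on the curve and $d(x,z)<d(x,y)/(4c)$, the point $u$ is far from $z$ compared with $d(x,z)$. In that situation I will apply the distance condition at $z$ itself: $\delta_G(z)\ge d(x,z)/c$. Combined with $\delta_G(u)\ge d(u,y)/c$, I will aim for a direct bound on $d(x,u)$.

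If this stays stubborn, I will switch to a cutting argument. Take the first point $z'$ on $\gamma$ with $d(x,z')=d(x,y)/(4c)$. Then on $\gamma[x,z']$ every $u$ satisfies $d(x,u)\le d(x,y)/(4c)<d(u,y)$, so $d(x,u)\le c\,\delta_G(u)$ there. For $z\in\gamma[x,z']$, $\diam(\gamma[x,z])\le 2\max_u d(x,u)$, and the maximizer $u$ has $\delta_G(u)\le \delta_G(z)+2d(x,u)$. This does not close directly either. I will therefore settle it, as in \cite[pp.~236--237]{Va04}, by modifying the curve on small pieces using the $\delta_G$-balls. Concretely, where $d(x,u)$ is comparable to $\delta_G(u)$, the curve stays in a Whitney-type region, and diameter is controlled by $2c\,\delta_G$ up to a factor $1+2c$. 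The resulting constant should be of the form $c'=O(c^3)$.
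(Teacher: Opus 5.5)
\paragraph{Verdict: genuine gap.} Your easy direction and your Case A are correct. The converse, however, cannot go the way you plan. The claim that the original distance $c$-uniform curve $\gamma$ is itself diameter $c'$-uniform is false, so Case B is not merely hard but unprovable.

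Here is a counterexample. Let $G=\{(a,b)\in\mathbb{R}^2: b>0\}$, so $\delta_G(a,b)=b$. Let $x=(0,1)$ and $y=(L,1)$ with $L\ge 8$, and let $\gamma$ be the polygonal arc through $(0,1)$, $(0,L/2)$, $(-1,L/2)$, $(-1,1/2)$, $(1,1/2)$, $(L/2,L/2)$, $(L,1)$. A segment-by-segment check gives $\diam(\gamma)\le 2\,d(x,y)$ and $\min\{d(x,w),d(w,y)\}\le 5\,\delta_G(w)$ for all $w\in\gamma$, independently of $L$. Now take $z=(0,1/2)$, which falls in your Case B. There $\delta_G(z)=1/2$, while $\gamma[x,z]\ni(0,L/2)$ and $\gamma[z,y]\ni y$, so both subarcs have diameter at least $L/2-1$.

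This is exactly the obstacle you sensed. A distance uniform curve may make an excursion of size comparable to $d(x,y)$ and then return to a point $z$ near $x$. At such a point the distance condition only yields $\delta_G(z)\ge\delta_G(x)/(1+c)$, and $\delta_G(x)$ may be far smaller than $d(x,y)$. No arrangement of triangle inequalities along $\gamma$ can repair this. Your cutting at radius $d(x,y)/(4c)$ fails for the same reason, since the excursion can happen inside that ball. The closing sentence about ``modifying the curve on small pieces'' in a ``Whitney-type region'' does not describe an argument.

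\paragraph{What is missing: a global replacement of the curve.}
\begin{itemize}
\item[(a)] The paper keeps $\gamma$ only when $d(x,y)\le\frac12\max\{\delta_G(x),\delta_G(y)\}$. In that case $\delta_G(w)\ge\delta_G(x)/(2(1+c))$ on $\gamma$ and $\diam(\gamma)\le \frac{c}{2}\delta_G(x)$.
\item[(b)] Otherwise it takes $z_0\in\gamma$ with $d(x,z_0)=d(z_0,y)=t$ and, by symmetry, builds a new curve $\beta$ from $x$ to $z_0$ as follows.
\begin{itemize}
\item[(1)] Set $r=\delta_G(x)/16$ and choose $m$ with $2^{m-1}r\le t<2^m r$.
\item[(2)] Let $x_1$ be the first point of $\gamma[x,z_0]$ on $S(x,r)$.
\item[(3)] For each $j\ge 1$, choose a \emph{new} distance $c$-uniform curve $\alpha_j$ from $x_j$ to $z_0$, and let $x_{j+1}$ be its first point on $S(x,2^j r)$.
\item[(4)] $\beta$ is $\gamma[x,x_1]$, followed by the arcs $\alpha_j[x_j,x_{j+1}]$ for $1\le j\le m-2$, followed by all of $\alpha_{m-1}$.
\end{itemize}
\end{itemize}

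\paragraph{Why this works.} The first-exit choices force the part of $\beta$ up to $x_{j+1}$ to lie in $\overline{B}(x,2^j r)$. Its diameter there is therefore at most $2^{j+1}r$, which rules out excursions like the one above.

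On $\alpha_j[x_j,x_{j+1}]$ one gets $\delta_G(w)\ge 2^{j-3}r/c^2$ in two ways:
\begin{itemize}
\item[(1)] If $w$ is close to $x_j$, apply the distance condition for $\alpha_{j-1}$ at $x_j$. This works because $x_j$ is at distance at least $2^{j-2}r$ from both $x_{j-1}$ and $z_0$.
\item[(2)] Otherwise, apply the distance condition for $\alpha_j$ at $w$.
\end{itemize}
Restarting the curve at each $x_j$ is what makes the lower bound on $\delta_G$ scale with $2^j r$ rather than with $\delta_G(x)$. The final curve $\alpha_{m-1}$ is handled by three cases: $w$ near $x_{m-1}$, near $z_0$, or away from both.
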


\begin{proof}
	Since a diameter uniform domain is by definition distance uniform, it suffices to show that a distance uniform domain is also diameter uniform.
	Let $G$ be a distance $c$-uniform domain for some $c\geq 1$. It suffices to show that there exists a constant $c^\prime\geq 1$ such that each pair of points $x, y\in G$ can be joined by a diameter $c^\prime$-uniform curve.
	
	Let \( x, y \in G \).  Then there exists a distance $c$-uniform curve $\gamma$ joining $x$ and $y$ satisfying 
	\begin{equation}\label{diam-turning}
		\diam(\gamma)\leq c\, d(x, y)
	\end{equation}
	and for all $w\in \gamma$,
	\begin{equation}\label{diam-John}
		\min\{d(x,w), d(w,y)\}\leq c\, \delta_G(w).
	\end{equation} 
	
	If $d(x,y) \leq \frac{1}{2} \max\{\delta_G(x), \delta_G(y)\},$
	without loss of generality, we may assume $\delta_G(x) \geq \delta_G(y)$.
	Then 
	\begin{equation}\label{delta-com}
		\delta_G(y) \geq \delta_G(x) - d(x,y) \geq \frac{1}{2}\, \delta_G(x).
	\end{equation}
	It follows from \eqref{diam-John} and \eqref{delta-com} that for all $w\in \gamma$, 
	$$\frac{1}{2}\,\delta_G(x)\leq \min\{\delta_G(x),\delta_G(y)\}\leq \delta_G(w)+\min\{d(x,w), d(w,y)\}\leq(1+c)\,\delta_G(w).$$
	This, together with \eqref{diam-turning}, yields that for all $w\in\gamma$,
	$$\diam(\gamma)\leq c\,d(x,y)\leq\frac{1}{2}c\,\delta_G(x)\leq c(1+c)\,\delta_G(w),$$
	which implies that $\gamma$ is a diameter  $c(1+c)$-John curve. This, together with \eqref{diam-turning}, yields that $\gamma$ is a  diameter $c(1+c)$-uniform curve joining $x$ and $y$.
	
	If $d(x,y)\geq \frac{1}{2}\max\{\delta_G(x),\delta_G(y)\}$,
	let \( z_0 \in \gamma \) be a point with \(d(x,z_0)=d(z_0,y)=:t \). Then, by \eqref{diam-turning} and \eqref{diam-John}, we have
	\begin{equation}\label{d t}
		\frac{1}{2}\,d(x,y)\leq\frac{1}{2}\,(d(x,z_0)+d(z_0,y))=t\leq \diam(\gamma)\leq c\,d(x,y)
	\end{equation}
	and
	\begin{equation}\label{t z0}
		t=d(x,z_0)=d(z_0,y)\leq c\,\delta_G(z_0).
	\end{equation}
	To find a diameter $c^\prime$-uniform curve  joining $x$ and $y$, by symmetry, it suffices 
	to find a rectifiable curve \(\beta\) joining $x$ and $z_0$ such that
	\begin{equation}\label{diam-turning-to}
		\diam(\beta) \leq c_1\,d(x,y)
	\end{equation}
	and for all $w\in \beta$,
	\begin{equation}\label{diam-John-to}
		\diam(\beta[x, w]) \leq c_1\,\delta_G(w),
	\end{equation}
	where $c_1\geq 1$ is a constant depending only on $c$.

	Setting \( r=\delta_G(x)/16 \), by \eqref{d t}, we have \( r\leq d(x,y)/8 \leq t/4 \). Hence there is a unique integer \( m \geq 3 \) such that
	
	\begin{equation}\label{t-r}
		2^{m-1}r \leq t < 2^m r.
	\end{equation}

	We define inductively points \( x_j\in G \) and curves \( \alpha_j, \, 1 \leq j \leq m-1 \), as follows. Let $x_0=x$ and $\alpha_0=\gamma[x,z_0]$. Choose $x_1$ to be the first point on $\alpha_0$ along the direction from $x$ to $z_0$ such that $x_1\in \alpha_0\cap S(x_0,r)$. Since $G$ is a distance $c$-uniform domain, there is a distance $c$-uniform curve $\alpha_1$ joining $x_1$ and $z_0$. Choose $x_2$ to be the first point on $\alpha_1$ along the direction from $x_1$ to $z_0$ such that $x_2\in\alpha_1\cap S(x,2r)$. Let $\alpha_2$ be a distance $c$-uniform  curve joining $x_2$ and $z_0$. Choose $x_3$ to be the first point on $\alpha_2$ along the direction from $x_2$ to $z_0$ such that $x_3\in\alpha_2\cap S(x,4r)$. By repeating this procedure, one obtains a sequence of points $x=x_0, x_1, \dots, x_{m-1}$ and a family of curves $\alpha_0, \alpha_1, \dots, \alpha_{m-1}$.
	
	From the construction, for $1\leq j \leq m-1$, we have $x_j\in \alpha_{j-1}\cap S(x, 2^{j-1}r)$ and $\alpha_j$ is a distance $c$-uniform curve joining $x_j$ and $z_0$. Therefore, for any $1\leq j \leq m-1$, it holds that
	\begin{equation}\label{d-1}
		d(x_j, x_{j-1})\geq d(x_j, x)-d(x, x_{j-1})=2^{j-1}r-2^{j-2}r= 2^{j-2}r
	\end{equation}
	and
	\begin{equation}\label{d-2}
		d(x_j, z_0)\geq d(z_0, x)-d(x, x_j) =t-2^{j-1}r\geq 2^{m-1}r-2^{j-1}r \geq 2^{j-2}r.
	\end{equation}
	
	Define successive curves \( \beta_0, \ldots, \beta_{m-1} \) by \(\beta_0 = \gamma[x_0, x_1], \, \beta_{m-1} = \alpha_{m-1} \) and by \( \beta_j=\alpha_j[x_j, x_{j+1}] \) for \( 1 \leq j \leq m-2 \).
	Let $\beta=\bigcup\limits_{j=1}^{m-1}\beta_j$.
	For each $0\leq j\leq m-2$, since $\bigcup\limits_{i=0}^{j}\beta_i\subset \overline{B}(x_0, 2^j r)$, we obtain
	\begin{equation}\label{diam  1,m-2}
		\diam(\beta_j)\leq \diam\Big(\bigcup_{i=0}^{j}\beta_i\Big)\leq  2^{j+1}r.
	\end{equation}
	Recalling that $\beta_{m-1}=\alpha_{m-1}$ is a distance $c$-uniform curve, by the fact that both $x_{m-1}$ and $z_0$ belong to $B(x, 2^m r)$, we obtain
	\begin{equation}\label{diam m-1}
		\diam(\beta_{m-1})\leq cd(x_{m-1},z_0)\leq 2^{m+1}cr.
	\end{equation}
	Moreover, by \eqref{d t} and \eqref{t-r}, we have \( 2^{m-1}r \leq t\leq c\,d(x,y) \).
	Therefore, it follows from \eqref{diam  1,m-2} and \eqref{diam m-1} that 
	\begin{equation}\label{diam-beta}
		\diam(\beta)\leq   2^{m-1}r+2^{m+1}cr\leq 2^{m+2}cr\leq  8c^2 \,d(x,y),
	\end{equation}
	which gives \eqref{diam-turning-to} with $c_1=8c^2.$
	
	We now prove that $\beta$ satisfies \eqref{diam-John-to}. Let \( w \in \beta_j \).  If \( j = 0 \), then since $\beta_0=\gamma[x_0, x_1]$ and  $d(x,w)\leq r\leq d(w,y)$, it follows from \eqref{diam-John} that
	$$d(x, w)=\min\{d(x,w), d(w,y)\}\leq c\,\delta_G(w).$$
	This, together with \eqref{diam  1,m-2}, yields 
	\begin{equation}\label{diam-John-0}
		\diam(\beta[x,w])\leq \diam (\beta_0)\leq 2r=\frac{1}{8}\,\delta_G(x)\leq\frac{1}{8}\,(\delta_G(w)+d(x,w))\leq\frac{1}{8}(1+c)\,\delta_G(w).
	\end{equation}

	If \( 1 \leq j \leq m - 2 \), then we  divide the argument into the following  two cases.
	
	\textbf{Case 1.} \(d(x_j,w)\leq 2^{j-3} r / c\). Since \( x_j \in \alpha_{j-1} \), and \( \alpha_{j-1} \) is a distance $c$-uniform curve, it follows from \eqref{d-1} and \eqref{d-2} that
	\[
	c\,\delta_G(x_j) \geq \min\{d(x_j,x_{j-1}),d(x_j,z_0)\}\geq 2^{j-2} r.
	\]
	Thus,
	$$\delta_G(w)\geq \delta_G(x_j)-d(x_j,w)\geq 2^{j-2}r/c-2^{j-3}r/c\geq2^{j-3}r/c.$$
	Combining this with \eqref{diam  1,m-2}, we obtain
	$$\diam(\beta[x,w])\leq 2^{j+1}r\leq 16c\,\delta_G(w).$$
	
	\textbf{Case 2.} \(d(x_j,w) \geq 2^{j-3} r / c\). Since $d(w, x)\leq d(x_{j+1}, x)=2^{j}r$, we have 
	$$d(w,z_0)\geq d(z_0, x)-d(x, w)\geq d(z_0, x)-d(x_{j+1}, x)=t-2^j r \geq 2^{m-2} r.$$ 
	Then the distance $c$-uniform condition for \( \alpha_j \) gives
	$$c\,\delta_G(w)\geq \min\{d(x_j,w), d(w,z_0)\} \geq  2^{j-3} r / c.$$
	This, along with \eqref{diam  1,m-2}, yields
	$$\diam(\beta[x,w])\leq  2^{j+1}r\leq16c^2\,\delta_G(w).$$
	
	Combining the two cases, we obtain
	\begin{equation}\label{diam-John-1}
		\diam(\beta[x,w])\leq  16c^2\,\delta_G(w).
	\end{equation}
	
	Finally assume that \( j = m - 1 \). Recall from \eqref{diam-beta} that \( \diam(\beta) \leq 2^{m+2} cr \). We consider the following three cases.
	
	\textbf{Case A.} \(d(x_{m-1},w)\leq 2^{m-4} r / c\).  Recall that the estimates in \eqref{d-1} and \eqref{d-2}  yield 
	$$d(x_{m-2},x_{m-1})\geq 2^{m-3} r \quad\text{and}\quad d(x_{m-1},z_0)\geq 2^{m-3} r.$$
	Since $x_{m-1}\in \alpha_{m-2}$ and  $\alpha_{m-2}$ is a distance $c$-uniform curve,  we have 
	$$c\,\delta_G(x_{m-1}) \geq \min\{d(x_{m-2},x_{m-1}), d(x_{m-1},z_0)\}\geq 2^{m-3}r. $$
	Thus,
	$$\delta_G(w)\geq\delta_G(x_{m-1})-d(x_{m-1},w)\geq 2^{m-3}r/c-2^{m-4}r/c\geq 2^{m-4}r/c.$$
	Combining this with \( \diam(\beta) \leq 2^{m+2} cr \), we obtain
	$$\diam(\beta[x,w])\leq \diam(\beta)\leq 2^{m+2}cr\leq 64c^2\,\delta_G(w).$$
	
	\textbf{Case B.} \(d(w,z_0) \leq 2^{m-4} r / c\). By \eqref{t z0} and \eqref{t-r}, we get
	$$\delta_G(z_0) \geq t/c \geq2^{m-1} r / c $$
	which implies
	$$ \delta_G(w) \geq \delta_G(z_0)-d(z_0,w)\geq 2^{m-1}r/c-2^{m-4}r/c\geq2^{m-2}r/c.$$
	Consequently,
	$$\diam(\beta[x,w])\leq\diam(\beta)\leq 2^{m+2}cr\leq 16c^2\,\delta_G(w).$$
	
	\textbf{Case C.} \(\min\{d(x_{m-1},w), d(w,z_0)\}> 2^{m-4} r / c\). Since  \( \alpha_{m-1} \) is a  distance uniform curve, we have 
	$$\delta_G(w) \geq \min\{d(x_{m-1},w), d(w,z_0)\} /c>2^{m-4} r / c^2.$$
	Therefore,
	$$\diam(\beta[x,w])\leq \diam(\beta)\leq 2^{m+2}cr\leq64 c^3\,\delta_G(w).$$

	The three cases together yield
	\begin{equation}\label{diam-John-2}
		\diam(\beta[x,w])\leq  64c^3\,\delta_G(w).
	\end{equation}

Combining  \eqref{diam-John-0}, \eqref{diam-John-1} and \eqref{diam-John-2}, we conclude that \eqref{diam-John-to} holds with $c_1=64 c^3$. This completes the proof.
\end{proof}

\begin{proof}[\textbf{Proof of Theorems \ref{diam John is John} and \ref{distance uniform is uniform}}]
Because a length John domain is diameter John, we only need to prove that a diameter John domain is length John. The proof of Theorem \ref{diam John is John} follows from part (1) of Lemma \ref{diameter implies length}.

For Theorem \ref{distance uniform is uniform}, the implications $(i)\Rightarrow (ii)$ and  $(ii)\Rightarrow (iii)$ are immediate  from the definition. Moreover, $(iii)\Rightarrow (i)$  follows by combining Lemma \ref{distance uniform is diameter uniform} with part (2) of Lemma \ref{diameter implies length}.
\end{proof}

 \begin{proof}[\textbf{Proof of Corollary \ref{John-cor}}]
By Theorems \ref{diam John is John} and \ref{distance uniform is uniform},   to prove Corollary \ref{John-cor}, it is enough to show that $G\subsetneq X$ is locally quasiconvex. In fact, since $X$ is a length space,   it is  $c$-quasiconvex  for any $c>1$. Hence, by Lemma \ref{qc is locally qc}, $G$ is locally quasiconvex. This completes the proof.
	\end{proof}

\section*{Acknowledgments}
	The first author (Yaxiang Li) was partly supported by NSFC under Grant No. 12371071 and  NSF of Hunan Province under Grant No. 2026JJ50357. The second author (Yahui Sheng) was partly  supported by NSFC under Grant No. 12571081. The third author (Zhuang Wang) was partly supported by NSF of Hunan Province under Grant No. 2024JJ6299 and the Scientific Research Project of the Hunan Provincial Education Department  under Grant No. 25B0095.
	
	We sincerely thank Prof. Manzi Huang for her careful reading of the manuscript and her valuable comments and suggestions, which have greatly improved this work.

\bigskip

\noindent Yaxiang  Li\\  E-mail address: yaxiangli@163.com\medskip

\noindent School of Mathematics and Statistics, Hunan First Normal University, Changsha, Hunan 410205, P. R. China.

\bigskip
\bigskip

\noindent Yahui Sheng\\E-mail address: yahuisheng\_26@163.com\medskip

\noindent Zhuang Wang\\ E-mail address: zwang@hunnu.edu.cn\medskip

\noindent
Key Laboratory of Computing and Stochastic Mathematics (Ministry of Education), School of Mathematics and Statistics, Hunan Normal University, Changsha, Hunan 410081, P. R. China.

\end{document}